\theoremstyle{plain}
\newtheorem{Th}{Theorem}[section]
\newtheorem{Le}[Th]{Lemma}
\newtheorem{Pro}[Th]{Proposition} 
\newtheorem{Cor}[Th]{Corollary}
\theoremstyle{definition}
\newtheorem{Def}[Th]{Definition}
\numberwithin{equation}{section}
\def\eI{{^{I}\!\mathrm{E}}}
\def\eII{{^{I\!I}\!\mathrm{E}}}
\def \Hn{\mathbb{H}^n}
\def\dH{\partial\mathbb{H}^n}
\def\acts{\curvearrowright}
\begin{document}
\title[]{The boundary model for the continuous cohomology of Isom$^+(\Hn)$.}
\author[]{Hester Pieters}
\address{Mathematics department, Technion, Haifa 32000, Israel}
\email{hester.frede@technion.ac.il}
\thanks{This research was supported by Swiss National Science Foundation grant number PP00P2-128309/1.}
\keywords{Continuous group cohomology, continuous bounded cohomology, hyperbolic spaces, measurable cohomology}
\subjclass[2010]{20J06, 22E41, 46M20, 53C35}
\date{}
\maketitle

\begin{abstract}
We prove that the continuous cohomology of $\text{Isom}^+(\mathbb{H}^n)$ can be measurably realized on the boundary of hyperbolic space. This implies in particular that for $\text{Isom}^+(\mathbb{H}^n)$ the comparison map from continuous bounded cohomology to continuous cohomology is injective in degree $3$. We furthermore prove a stability result for the continuous bounded cohomology of $\text{Isom}(\mathbb{H}^n)$ and $\text{Isom}(\mathbb{H}_{\mathbb{C}}^n)$.  
\keywords{}
\end{abstract}

\section{Introduction}
\label{intro}
Let $H$ be a locally compact second countable group. We denote by $H_m^*(H;\mathbb{R})$ the measurable cohomology of $H$ with trivial real coefficients (see \cite{Mo12}, \cite{Mo3}, \cite{Mo4} and Section \ref{Prelim}). If H acts on a measure space $X$ we define $H_m^*(H\acts X;\mathbb{R})$ to be the cohomology of the cocomplex $(C^*(X;\mathbb{R})^H, \delta)$, where $C^p(X;\mathbb{R})^H$ is the space of $H$-invariant measurable maps $X^{p+1}\to\mathbb{R}$ identifying those which agree almost everywhere and $\delta$ is the standard homogeneous differential (see Section \ref{Prelim} for precise definitions). Fix a basepoint $x\in X$. Given a cocycle $\alpha\in C^p(X;\mathbb{R})^H$ we obtain a cocycle $\alpha_x\in C(H^{p+1};\mathbb{R})^H$ by
\[
\alpha_x(h_0,\dots,h_p):=\alpha(h_0\cdot x,\dots, h_p\cdot x).
\]
Since the class of $\alpha_x$ does not depend on the chosen basepoint this defines a map
\[
\iota_X:H_m^*(H\acts X;\mathbb{R})\to H_m^*(H;\mathbb{R}).
\]
We say that the measurable cohomology of $H$ (with trivial $\mathbb{R}$-coefficients) is \textit{measurably realized on $X$} if this map is an isomorphism. Since measurable and continuous cohomology coincide for trivial $\mathbb{R}$-coefficients \cite[Theorem A]{AM} we will then also say that the continuous cohomology of $H$ is measurably realized on $X$. Let $\text{Isom}^+(\mathbb{H}^n)$ be the group of orientation preserving isometries of real hyperbolic $n$-space $\mathbb{H}^n$. Our main result is 
\begin{Th}
\label{iso}
Let $n\geq 2$. The continuous cohomology of $\mathrm{Isom}^+(\mathbb{H}^n)$ is measurably realized on the boundary $\partial\mathbb{H}^n$ of real hyperbolic space (with the natural action). Furthermore, all cohomology classes have essentially bounded representatives in this boundary resolution. 
\end{Th}
An immediate consequence of Theorem \ref{iso} is
\begin{Cor}
\label{inj}
The comparison map 
\[
c: H^3_{c,b}(\text{Isom}^+(\mathbb{H}^n);\mathbb{R})\to H_c^3(\text{Isom}^+(\mathbb{H}^n);\mathbb{R})
\]
is injective for all $n\geq 2$.
\end{Cor}
For $n=3$ the above theorem is a result of Bloch. Recall that, by the van Est isomorphism, $H_c^*(\text{Isom}^+(\mathbb{H}^3);\mathbb{R})$ can be identified with the de Rham cohomology of the $3$-sphere. Thus $H_c^3(\text{Isom}^+(\mathbb{H}^3);\mathbb{R})$ is one dimensional and it is well known that it is generated by the volume function $\text{Vol}\in L^\infty((\partial\mathbb{H}^3)^4;\mathbb{R})^{\text{Isom}^+(\mathbb{H}^3)}$ which sends four points in the boundary to the volume of the ideal simplex they span. Hence Bloch's result implies that, up to scalar multiplication, Vol is the only cocycle in degree 3 defined on the boundary. He used this to show that the Bloch-Wigner dilogarithm is essentially the only measurable function on $\mathbb{P}^1_{\mathbb{C}}$ that satisfies the five term relation. Indeed, applying such a function to the cross ratio of $4$ points in $\partial\mathbb{H}^3$ gives a measurable cocycle and thus a multiple of the volume function. The main difficulty in the generalization from $n=3$ to higher dimensions comes from the fact that  for $n\geq 4$ the stabilizer of $3$ points in $\partial\mathbb{H}^n$ is not trivial. This prevents a straightforward generalization of Bloch's proof for degree $p>3$. Below we describe our strategy of proof.

\subsection*{About the proof of Theorem \ref{iso}}
Let $G=\text{Isom}^+(\Hn)$ and $C^p=C((\dH)^{p+1};\mathbb{R})$ the $G$-module of measurable functions $(\dH)^{p+1}\to\mathbb{R}$ identifying those which are equal almost everywhere. We follow Bloch's approach by looking at the spectral sequences associated to the double complex $(C(G^{q+1};C^p)^G,d,\delta)$ (with differentials $d,\delta$ to be defined below in Section 3). For $p>2$ we have $\eI_1^{p,q}=H^*_m(\mbox{SO}(n-2);C^{p-3})$, with SO$(n-2)$ the stabilizer of $3$ points in the boundary of hyperbolic space. In the case of $n=3$ this group is trivial and thus $\eI_1^{p,q}$ automatically vanishes for $q>0$. Because of this, in Bloch's proof for $n=3$ the fact that the spectral sequence degenerates at the second page already follows from looking at the first page. One would expect that $H^*_m(K;A)$ vanishes if $K$ is compact no matter what the coefficients $A$ are. However, if the coefficients $A$ are not locally convex there is no way to integrate over them so it is not possible to construct a coboundary by integration over the first variable. Since $C((\partial\mathbb{H}^n)^{p-2};\mathbb{R})$ is a (non-locally convex) F-space it is not clear (how to prove) that $H^*_m(\mbox{SO}(n-2);C^{p-3})$ vanishes for $n>3$.  Here we will instead prove that if a cocycle $[\alpha]\in \eI_1^{p,q}$ survives to the second page of the spectral sequence, i.e. if $\delta\alpha=d\lambda$, then it is cohomologous in $\eI_2^{p,q}$ to a coboundary in ${}\eI_1^{p,q}$. As in the proof in \cite{AM} of the isomorphism between measurable and continuous cohomology in the case of Fr\'echet coefficients \cite[Theorem A]{AM}, we would like to show by dimension-shifting induction that each cohomology class has a representative that is locally totally bounded. In fact, by a double induction argument, we will show that this is the case for a cocycle in $\eI_2^{p,q}$. The first step of this dimension-shifting induction argument (Proposition \ref{bounded}) implies in particular that all cocycles in $(C^p)^G$ are cohomologous to essentially bounded cocycles, which is the second part of Theorem \ref{iso}. 
It is not known if Theorem \ref{iso} generalizes to all semisimple Lie groups $G$ with Furstenberg boundary $G/P$. Examples indicate that this might be the case. In \cite{Go1}, \cite{Go2} Goncharov defines measurable cocycles on the space of flags $\mathcal{F}l(\mathbb{C}^m)$ representing the Borel classes in $H_c^{2n-1}(\mathrm{SL}_m(\mathbb{C});\mathbb{R})$ for $n=2,3$ and $m\geq 2n-1$ using the classical di- and trilogarithm. These cocycles are all bounded and therefore give further evidence for the conjecture (\cite{Du}, \cite{Mon2}) that the comparison map between continuous bounded cohomology and continuous cohomology is an isomorphism for all semisimple connected Lie groups with finite center. This has so far only been established in a few cases. For degree 2 it was proven by Burger and Monod in \cite{BM}. In degree 3 and 4 it has been proven for $\mathrm{SL}_2(\mathbb{R})$ by Burger-Monod \cite{BM2} and Hartnick-Ott \cite{HO} respectively. For $\text{Isom}^+(\mathbb{H}^n)$, injectivity in degree $3$ was so far only known for $n=2$ by Burger-Monod \cite{BM} and for $n=3$ by Bloch's result. 
\\\\
Corollary \ref{inj} also follows from a simpler argument which only uses some basic properties of hyperbolic space and the injectivity in degree 3 for $\text{Isom}^+(\mathbb{H}^3)$. This method of proof also gives a stability result for the isometry group $\text{Isom}(\mathbb{H}^n_{(\mathbb{C})})$ of real (or complex) hyperbolic space.
\begin{Th}
\label{stability}
If $k+1\leq n$ then there exists an injection 
\[
H_{c,b}^k(\mathrm{Isom}(\mathbb{H}_{(\mathbb{C})}^n);\mathbb{R})\hookrightarrow H_{c,b}^k(\mathrm{Isom}(\mathbb{H}_{(\mathbb{C})}^{n+1});\mathbb{R}).
\] 
\end{Th}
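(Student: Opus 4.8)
The plan is to compute both sides on amenable boundaries and to exhibit the stabilisation map as a \emph{split} injection. Write $G_m:=\text{Isom}(\mathbb{H}^m_{(\mathbb{C})})$ and fix a totally geodesic copy $\mathbb{H}^n_{(\mathbb{C})}\subset\mathbb{H}^{n+1}_{(\mathbb{C})}$, so that $G_n$ is essentially the stabiliser of $\mathbb{H}^n_{(\mathbb{C})}$ in $G_{n+1}$. By Burger--Monod, $H^k_{c,b}(G_{n+1})$ is computed by $\bigl(L^\infty((\partial\mathbb{H}^{n+1}_{(\mathbb{C})})^{\bullet+1})\bigr)^{G_{n+1}}$, since $\partial\mathbb{H}^{n+1}_{(\mathbb{C})}=G_{n+1}/P$ with $P$ an amenable (minimal parabolic) subgroup. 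The first step is a lemma: the \emph{restricted} action of the closed subgroup $G_n$ on $\partial\mathbb{H}^{n+1}_{(\mathbb{C})}$ is again amenable, because amenability of an action passes to closed subgroups. Hence the \emph{same} complex, now taking only $G_n$-invariants, also computes $H^k_{c,b}(G_n)$; this is what will let me realise restriction and the comparison map concretely.

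Given this, restriction $r\colon H^k_{c,b}(G_{n+1})\to H^k_{c,b}(G_n)$ is the inclusion of invariants $(\cdots)^{G_{n+1}}\hookrightarrow(\cdots)^{G_n}$ on that complex, and it suffices to produce $s$ in the opposite direction with $r\circ s$ an isomorphism. I would build $s$ from one geometric gadget. For a tuple $(\xi_0,\dots,\xi_k)$ in $\partial\mathbb{H}^{n+1}_{(\mathbb{C})}$ in general position --- a conull condition once $k+1\le n$ --- the points span a totally geodesic $V\cong\mathbb{H}^k_{(\mathbb{C})}$ with the $\xi_i\in\partial V$. Choosing $g\in G_{n+1}$ with $gV=V_0$, a fixed standard $\mathbb{H}^k_{(\mathbb{C})}$ inside $\mathbb{H}^n_{(\mathbb{C})}$, I set
\[
\Phi(\xi_0,\dots,\xi_k)\;:=\;G_n\cdot(g\xi_0,\dots,g\xi_k)\ \in\ G_n\backslash\bigl(\partial\mathbb{H}^n_{(\mathbb{C})}\bigr)^{k+1}.
\]
This is well defined because the stabiliser of $V_0$ acts on $\partial V_0$ only through its restriction to $\text{Isom}(V_0)$ --- the elements fixing $V_0$ pointwise, i.e. the compact normal factor $\mathrm{O}(n+1-k)$ (resp. $\mathrm{U}(n+1-k)$), fix $\partial V_0$ as well --- and those intrinsic isometries extend into $G_n$. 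It is manifestly $G_{n+1}$-invariant and is measurable for a measurable choice of $g$. Precomposition $\Psi(f)=f\circ\Phi$ then carries $G_n$-invariant cochains on $\partial\mathbb{H}^n_{(\mathbb{C})}$ to $G_{n+1}$-invariant cochains on $\partial\mathbb{H}^{n+1}_{(\mathbb{C})}$ without increasing the supremum norm, and it commutes with faces: deleting a point replaces $V$ by a totally geodesic subspace of $V\subset\mathbb{H}^n_{(\mathbb{C})}$, whose re-straightening is absorbed by an element of $G_n$. Thus $\Psi$ is a cochain map inducing $s\colon H^k_{c,b}(G_n)\to H^k_{c,b}(G_{n+1})$ for $k+1\le n$.

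Injectivity then falls out of the \emph{same} map. Regarded merely as a $G_n$-invariant morphism, $f\mapsto f\circ\Phi$ is a morphism of augmented strong resolutions of $\mathbb{R}$ by relatively injective $G_n$-modules, from the model on $\partial\mathbb{H}^n_{(\mathbb{C})}$ to the model on $\partial\mathbb{H}^{n+1}_{(\mathbb{C})}$ supplied by the lemma; by the standard comparison argument for such resolutions it realises the canonical isomorphism $\theta$ between the two computations of $H^k_{c,b}(G_n)$. Chasing definitions gives $r\circ s=\theta$, an isomorphism, so $s$ is injective. The real and complex cases run in parallel, the only change being that $V$ is a real, resp. complex, totally geodesic span and the normal group is $\mathrm{O}$, resp. $\mathrm{U}$.

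The hard part is concentrated in two places. First, the amenability lemma: one must argue cleanly that the restricted $G_n$-action on $G_{n+1}/P$ is amenable (via inheritance of amenability to the closed subgroup, the point stabilisers being $G_n\cap gPg^{-1}$, which are amenable as subgroups of amenable groups). Second, and more delicate, is the well-definedness and simplicial naturality of $\Phi$ in the measurable category: everything hinges on the triviality of the normal-group action on the boundary at infinity of a spanned totally geodesic subspace, and on full-measure genericity of spanning configurations, which must be verified separately in the real and complex settings, where the notion of totally geodesic span differs. Pinning the stable range to exactly $k+1\le n$ --- so that the spanned $\mathbb{H}^k_{(\mathbb{C})}$ comfortably embeds in the fixed $\mathbb{H}^n_{(\mathbb{C})}$ --- is where that bound is consumed.
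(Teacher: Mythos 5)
Your geometric core---straightening a tuple through the totally geodesic span of its points, absorbing the ambiguity in the choice of straightening into the smaller isometry group via extension of isometries, and spending the hypothesis $k+1\le n$ on the coboundary of one extra point---is exactly the mechanism of the paper's proof. But your implementation on the boundary has a genuine gap at the decisive step, the identification $r\circ s=\theta$. Your map $\Psi(f)=f\circ\Phi$ is \emph{not} a morphism of resolutions: $\Phi$ takes values in the orbit space $G_n\backslash(\partial\mathbb{H}^n_{(\mathbb{C})})^{k+1}$, so $f\circ\Phi$ only makes sense when $f$ is already $G_n$-invariant. The comparison theorem you invoke (uniqueness up to $G_n$-homotopy of morphisms of strong resolutions by relatively injective modules extending $\mathrm{id}_{\mathbb{R}}$) is a statement about $G_n$-equivariant maps of the modules themselves; the homotopies it produces are built from relative injectivity, which the subcomplexes of invariants do not inherit. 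A mere cochain map between invariant complexes that agrees with the identity on constants carries no such rigidity, so nothing forces it to induce the canonical isomorphism. Nor can $\Psi$ be upgraded to a resolution morphism: already in degree $0$, in the real case, there is no a.e.-defined $G_n$-equivariant measurable map $\partial\mathbb{H}^{n+1}\to\partial\mathbb{H}^n$, because the stabilizer in $G_n$ of almost every point of $\partial\mathbb{H}^{n+1}$ is a conjugate of the maximal compact subgroup of $G_n\cong\text{Isom}(\mathbb{H}^n)$, which acts on $\partial\mathbb{H}^n$ with no fixed point. So $r\circ s=\theta$ is precisely what remains unproven, and your outline contains no argument for it.

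It is worth seeing how the paper sidesteps this entirely: it never goes to the boundary. It computes $H^k_{c,b}$ with \emph{continuous} bounded cochains on the interior $\mathbb{H}^m_{(\mathbb{C})}$ (a proper homogeneous space with compact stabilizers), defines the same straightening map $j$ there, and takes for the splitting $r$ the literal restriction of cochains to $(\mathbb{H}^n_{(\mathbb{C})})^{k+1}\subset(\mathbb{H}^{n+1}_{(\mathbb{C})})^{k+1}$. That restriction is well defined for continuous cochains but meaningless for your $L^\infty$-classes, since $\partial\mathbb{H}^n_{(\mathbb{C})}$ has measure zero in $\partial\mathbb{H}^{n+1}_{(\mathbb{C})}$---this is exactly what forced you into the amenability detour and the inclusion-of-invariants model of restriction. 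On the interior, well-definedness of $j$ lets one take $g=e$ when all the points already lie in $\mathbb{H}^n_{(\mathbb{C})}$, so $r\circ j=\mathrm{id}$ holds on the nose at the cochain level; then $j(\beta)=\delta\gamma$ gives $\beta=r\,j(\beta)=\delta(r\gamma)$, and injectivity follows with no resolution-comparison machinery at all. If you want to salvage your argument, the cheapest repair is to transplant your $\Phi$ from boundary tuples to interior tuples and run it in the continuous-cochain model, where the splitting is literally restriction.
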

Such stability results give a further tool for computing the continuous bounded cohomology of Lie groups. In \cite{Mon3} Monod proves  a similar result for the continuous bounded cohomology of $\mathrm{SL}_n$. More precisely, for any local field $k$ and $0\leq q\leq n-1$ he shows that the standard embedding $\mathrm{GL}_{n-1}(k)\hookrightarrow \mathrm{SL}_n(k)$ induces an isomorphism $H^q_{c,b}(\mathrm{SL}_n(k))\cong H^q_{c,b}(\mathrm{GL}_{n-1}(k))$. He proves this using nontrivial coefficients of $L^\infty$ type and a spectral sequence argument.

\subsection*{Acknowledgement}
The author would like to thank her doctoral advisor Michelle Bucher for many useful discussions and for carefully reading multiple drafts of this paper, Tobias Hartnick for his interest and helpful suggestions and the referee for suggesting many improvements to the presentation. 
\section{Preliminaries}
\label{Prelim}
\subsection{Polish Abelian $H$-modules}
Let $H$ be a locally compact second countable group. A \textit{Polish space} is a separable completely metrizable topological space, a \textit{Polish group} is a topological group which is also a Polish space in its topology and a \textit{Polish Abelian $H$-module} is a triple $(A,\rho,T)$, where $A$ is a Polish Abelian group with a translation-invariant metric $\rho$ compatible with the topology on $A$ and $T:H\curvearrowright A$ an action by continuous automorphisms. It is a \textit{F-space} if $A$ is furthermore a separable real topological vector space in its Polish topology. A \textit{Fr\'echet space} is a locally convex F-space. 

\begin{Def}
A Borel map $f:X\to Y$ from a locally compact space to a Polish space is \textit{locally totally bounded} if for any compact subset $K\subset X$ the image $f(K)$ is precompact in $Y$. 
\end{Def}

It follows from the Borel selection theorem (see e.g. \cite[Section 423]{Fre} and \cite[Lemma 32]{AM}) that 

\begin{Le}
\label{Borel section}
 Let $A$ be a Polish Abelian group and let $B<A$ be a closed subgroup. Then there exists a section $s:A/B\to A$ of the natural projection map $p:A\to A/B$ that is Borel and locally totally bounded. If $A$ is furthermore a Fr\'echet space then there exists a section that is continuous.
 \end{Le}

For any $\sigma$-finite measure space $(X,\mathcal{B},\mu)$ and any separable metric space $A$ let $C(X;A)$ be the set of equivalence classes of measurable functions $X\to A$, identifying those which agree $\mu$-almost everywhere (a.e.). Any class of measurable functions contains a Borel function so we could also define $C(X;A)$ as consisting of equivalence classes of Borel maps $X\to A$ and obtain the same space. We endow this space with the topology of convergence in measure. When $A=\mathbb{R}$ this gives the usual F-space structure on $C(X;\mathbb{R})$ and this space is often denoted by $L^0(X)$. There holds an exponential law for these spaces, i.e. if $(X_1,\mathcal{B}_1,\mu_1)$ and $(X_2,\mathcal{B}_2,\mu_2)$ are two $\sigma$-finite measure spaces then \cite[Theorem 1]{Mo3}:
\begin{equation}
\label{explaw}
C(X_1\times X_2;A)\cong C(X_1;C(X_2;A))\cong C(X_2;C(X_1;A)).
\end{equation}
If $A$ is a Polish Abelian $H$-module and $(X,\mathcal{B},\mu)$ is a $\sigma$-finite standard Borel space on which $H$ acts as a Borel transformation group such that it leaves some finite measure $\nu$ that is equivalent to $\mu$ quasi-invariant  then $C(X;A)$ is again a Polish Abelian $H$-module with action given by 
\[
(h\cdot f)(x_1,\dots,x_p):=  h\cdot (f(h^{-1}x_1,\dots, h^{-1}x_p)), 
\] 
for $f\in C(X;A)$, $h\in H$ and $x_1,\dots,x_p\in X$ \cite[Proposition 12]{Mo3}. Note that any equality between elements of $C(X;A)$ is understood to hold $\mu$-a.e.
\subsection{Cohomology theories}
 For locally compact second countable groups $H$ and Polish Abelian $H$-modules $A$ in \cite{Mo12}, \cite{Mo3} and \cite{Mo4} C.C. Moore developed the measurable cohomology theory $H^*_m(H;A)$. Denote by $C(H;A)^H$ the submodule of $C(H;A)$ consisting of $H$-invariant maps and let 
\[
d:C(H^{p+1};A)^H\to C(H^{p+2};A)^H
\]
be the standard homogeneous coboundary operator, i.e. for a cochain $\alpha\in C(H^{p+1};A)^H$ and $h_0,\dots,h_{p+1}\in H$
\begin{eqnarray*}
d\alpha(h_0,\dots,h_{p+1}):= \sum_{i=0}^{p+1} (-1)^i \alpha(h_0,\dots,\hat{h}_i,\dots,h_{p+1}).
\end{eqnarray*}
The \textit{measurable cohomology groups for $H$ with coefficients in $A$} are
\[
H_m^p(H;A):=\frac{\ker(d:C(H^{p+1};A)^H\to C(H^{p+2};A)^H)}{\text{im}(d:C(H^p;A)^H\to C(H^{p+1};A)^H)}.
\]
Analogously, $C_c(H^{p+1};A)^H$ and $C_{c,b}(H^{p+1};A)^H$ are defined to be the $H$-invariant cochains $H^{p+1}\to A$ that are respectively continuous and continuous bounded and we obtain the \textit{continuous cohomology groups}
\[
H_c^p(H;A):=\frac{\ker(d:C_c(H^{p+1};A)^H\to C_c(H^{p+2};A)^H)}{\text{im}(d:C_c(H^p;A)^H\to C_c(H^{p+1};A)^H)},
\]
and the \textit{continuous bounded cohomology groups}
\[
H_{c,b}^p(H;A):=\frac{\ker(d:C_{c,b}(H^{p+1};A)^H\to C_{c,b}(H^{p+2};A)^H)}{\text{im}(d:C_{c,b}(H^p;A)^H\to C_{c,b}(H^{p+1};A)^H)},
\]
where the appropriate coefficients $A$ for continuous bounded cohomology are the dual of a separable Banach space on which $H$ acts continuously and by linear isometries. For more information about this technical requirement see \cite{Mon}. For continuous cohomology we allow as coefficients $A$ all Fr\'echet spaces with a continuous $H$-action.  
\subsection{Buchsbaum's criterion}
\label{BuchsbaumSubsection}
Denote by $P(H)$ the category of Polish Abelian $H$-modules. A short exact sequence
\[
\xymatrix{
 0\ar[r] & A \ar[r]^{i}& B \ar[r]^j &C \ar[r] &0
 }
 \]
 in $P(H)$ is exact algebraically and such that the maps $i$ and $j$ are continuous homomorphisms intertwining the action of $H$. An \textit{effaceable cohomological functor $H^*(H;\cdot)$ on $P(H)$} is a a family $H^n(H;\cdot), n\geq 0$ of covariant functors from $P(H)$ to the category of Abelian groups such that the following three conditions hold
\begin{enumerate}
\item $H^0(H;A)=A^H$
\item \label{cond2} Every short exact sequence $0\rightarrow A\rightarrow B\rightarrow C\rightarrow 0$ of Polish Abelian $H$-modules induces a long exact sequence in cohomology
\begin{eqnarray*}
&& 0\to H^0(H;A)\to H^0(H;B)\to H^0(H;C)\to H^1(H;A)\to \dots\nonumber \\
&&\dots\to H^k(H;B)\to H^k(H;C)\to H^{k+1}(H;A)\to\dots,
\end{eqnarray*}

\item  \label{cond3} $H^*(H;\cdot)$ is \textit{effaceable} in the category of Polish Abelian $H$-modules. That is, for any Polish Abelian $H$-module $A$ and any  $a\in H^k(H;A)$ there exists a short exact sequence 
\[
0\to A\to B\to C\to 0
\]
such that the image of $a$ in $H^k(H;B)$ vanishes. 
\end{enumerate}
By Buchsbaum's criterion (\cite{Bu}), an effaceable cohomological functor $H^*(H;\cdot)$ on $P(H)$ is unique (if it exists). More general,

\begin{Le}[Buchsbaum's criterion, \cite{Bu}]
\label{BuchsbaumLemma}
Let $H_1^n(H;\cdot)$ and $H_2^n(H;\cdot)$ be families of covariant functors from  an Abelian category $\mathcal{A}$ to the category of Abelian groups such that $H_1^0(H,A)=H_2^0(H;A)$ for all $A\in\mathcal{A}$ and such that $H_1^n(H;\cdot)$ and $H_2^n(H;\cdot)$ both satisfy the above conditions \eqref{cond2} and \eqref{cond3} for the category $\mathcal{A}$. Then  
\[
H_1^n(H;A)\cong H_2^n(H;A),
\]
for all $n\in\mathbb{N}$ and all $A\in\mathcal{A}$. 
\end{Le}

C.C. Moore proved that measurable cohomology satisfies the above three requirements in the category $P(H)$ and is therefore the unique effaceable cohomological functor on $P(H)$ \cite[Section 4]{Mo3}. Let us briefly discuss why the three above conditions hold for the measurable cohomology $H_m^*$. The fact that $H_m^0(H;A)=A^H$ is immediate from the definition. For the third condition, Moore proves that $H^p(H;C(H;A))=0$ for $p>0$ \cite[Theorem 4]{Mo3}. Then any cohomology class $[\alpha]\in H_m^k(H;A)$ is effaced by the inclusion $\iota:A\hookrightarrow C(H;A)$ (sending $a\in A$ to the constant function $\equiv a$). Indeed, $\iota^*(\alpha)=\delta\beta$ with  $\beta:H^p\to C(H;A)$ defined by 
\[
\beta(h_1,\dots,h_p)(h):=(-1)^p\alpha(h_1,\dots,h_p,h).
\]
Lastly, the second condition, i.e. the existence of long exact sequences, is ensured by the existence of Borel sections: Let
\[
\xymatrix{
 0\ar[r] & A \ar[r]^{i}& B \ar[r]^j &C \ar[r] &0
 }
\]
be a short exact sequence of Polish Abelian $H$-modules. The induced sequence
\[\xymatrix{
0\ar[r] & C^q(H;A) \ar[r]^{i^*} & C^q(H;B)  \ar[r]^{j^*} & C^q(H;C) \ar[r] & 0
}
\]
is then also exact: The map $i^*:C^q(H;A)\to C^q(H;B)$ is clearly injective and the induced maps $i^*$ and $j^*$ are continuous. Furthermore, by Lemma \ref{Borel section} there is a Borel map $s:C\to B$ with $j\circ s=\mathrm{id}_C$ and thus if $\alpha\in C^q(H;C)$ then $s^*\circ \alpha\in C^q(H;B)$ is mapped to $\alpha$. Hence $j^*$ is surjective and the sequence is exact. Then a long exact sequence as in the second condition can be constructed in the standard way. 
\\\\
Since in general there exists no continuous cross section $B/A\to B$ continuous cohomology has no long exact sequences when we allow all Polish $H$-modules as coefficients. However, when restricting to Fr\'echet modules there do exist continuous cross sections and continuous cohomology is the unique effaceable cohomological functor on this category. In \cite{AM} T. Austin and C.C. Moore prove that measurable cohomology is also effaceable when restricted to Fr\'echet modules. Hence for Fr\'echet coefficients $A$ the natural inclusion $C_c^*(H;A)\hookrightarrow C^*(H;A)$ induces an isomorphism $H_m^*(H;A)\cong H_c^*(H;A)$ \cite[Theorem A]{AM}. The crucial step in the proof is \cite[Proposition 33]{AM} which states that for any cohomology class in $H_m^*(H;A)$ there exists a locally totally bounded representative. The main ingredients of its proof are dimension shifting and Lemma \ref{Borel section} which ensures that a locally totally bounded measurable cocycle $\bar{\alpha}:H^{p}\to C(H;A)/\iota(A)$ can be lifted to a locally totally bounded measurable map $\alpha: H^{p}\to C(H;A)$.

\subsection{Dimension shifting}
Let $\iota:A\hookrightarrow C(H;A)$ be the embedding of the $H$-module $A$ into $C(H;A)$ as the closed submodule of constant maps. Then, since $H^p(H;C(H;A))=0$ for $p>0$ \cite[Theorem 4]{Mo3}, the short exact sequence
\[
0\to A\hookrightarrow C(H;A) \twoheadrightarrow C(H;A)/\iota(A)\to 0
\]
induces isomorphisms 
\begin{equation}
\label{dimensionshifting}
H^p_m(H;A)\cong H_m^{p-1}(H;C(H;A)/\iota(A)),
\end{equation}
for all $p>0$. This allows for the technique of \textit{dimension shifting}, that is we can rewrite a cohomology group as a cohomology group of lower degree (but with different coefficients). Then by induction on degree, some algebraic properties that clearly hold in lower degree may be shown to hold in higher degrees as well. We will use this technique in the proof of Theorem \ref{iso}. 
\\\\
The connecting map $H^p_m(H;A)\to H_m^{p-1}(H;C(H;A)/\iota(A))$ realizing the above isomorphism \eqref{dimensionshifting} is induced by the map
\[
Q:C^p(H;A)\to C^{p-1}(H;C(H;A))
\]
given by 
\[
(Q\alpha)(h_0,\dots,h_{p-1})(h):=(-1)^p\alpha(h_0,\dots,h_{p-1},h),
\]
for $\alpha\in C^p(H;A)$ and $h,h_0,\dots,h_{p-1}\in H$. If $\alpha$ is a cocycle it follows directly that $d(Q\alpha)(h_0,\dots,h_p)$ is the constant map $h\mapsto \alpha(h_0,\dots,h_p)$ and thus the image of $Q\alpha$ under the quotient map $C(H;A)\twoheadrightarrow C(H;A)/\iota(A)$ defines a class in $H_m^{p-1}(H;C(H;A)/\iota(A))$. Furthermore, it can be shown that this image only depends on the cohomology class of $\alpha$ and that $Q$ induces the connecting map.

\subsection{Eckmann-Shapiro Lemma}
\label{ES Lemma}
Let $L<H$ again be a closed subgroup and let $A$ be a Polish $L$-module. Furthermore, let Ind$_L^H(A)$ be the Polish $H$-module consisting of all equivalence classes of measurable maps $f:H\to A$ such that $f(hl)=l^{-1}\cdot f(h)\text{ for almost all pairs }(l,h)\in L\times H$ with the action of $H$ given by:
\[
(h\cdot f)(h')=f(h^{-1} h').
\]
There holds an Eckmann-Shapiro lemma \cite[Theorem 6]{Mo3}:
 \[
 H_m^q(H;\text{Ind}_L^H(A))\cong H_m^q(L;A).
\]
Let $s:H/L\to H$ be a locally totally bounded measurable section such that $s(L)=e$. For $f\in C(H/L;A)$ define $\bar{f}\in\text{Ind}_L^H(A)$ by
\[
\bar{f}(h)=h^{-1} \cdot s(hL) \cdot f(hL).
\]
This induces an isomorphism $C(H/L;A)\cong \text{Ind}_L^H(A)$ \cite[Proposition 17]{Mo3}. The action of $H$ on $C(H/L;A)$ becomes $(h\cdot F)(h'L)=\lambda(h,h'L)\cdot F(h^{-1}h'L)$, where
\[
\lambda (h,h'L):=s(h'L)^{-1}\cdot h\cdot s(h^{-1}h'L) \hspace{.2cm}\in L.
\] 
If cochains in $H^q_m(H;\text{Ind}_L^H(A))$ are continuous, as for example in degree $1$ (\cite[Theorem 3]{Mo3}), we can give explicit maps on the cochain level which induce the isomorphism of the Eckmann-Shapiro lemma. Define
\[
u^n:C_c(H^{q+1};A)^H\to C_c(L^{q+1};A)^L
\]
by
\[
u^n(\sigma)(l_0,\dots,l_q)=\sigma(l_0,\dots,l_q),
\]
for $l_0,\dots,l_q\in L$. Note that this is well defined because $\sigma$ is continuous in $H^{q+1}$. The map
\[
v^n: C_c(H^{q+1};C(H/L;A))^H\to C_c(H^{q+1};A)^L
\]
can be defined as follows: Let $\beta\in C_c(H^{q+1};C(H/L;A))$ and $h\in H$. Define $F_h(\beta) \in C_c(H^{q+1};A)$ by
\[
F_h(\beta)(h_0,\dots,h_q):= \beta(s(hL)h_0,\dots,s(hL)h_q)(hL),
\]
for $h_0,\dots, h_q\in G$. Then if $\beta$ is $H$-invariant, $F_h(\beta)$ is independent of $h$ and gives an element in $C_c(H^{q+1};A)^L$. We define $v^n(\beta)$ to be this element and then we can define
\begin{equation}
\label{phi}
\phi=u^n\circ v^n:C_c(H^{q+1}; C(H/L;A))^H\to C_c(L^{q+1};A)^L.
\end{equation}
Its inverse in cohomology is given by	
\begin{equation}
\label{psi}
\psi(\alpha)(h_0,\dots,h_q)(hL)=\alpha(\lambda(h_0,hL),\dots,\lambda(h_q,hL)),
\end{equation}
where $\alpha\in C_c(L^{p+1};A)^H$ and $h, h_0,\dots, h_q\in H$. 
\section{Proof of Theorem \ref{iso}}
Let $G=\text{Isom}^+(\Hn)$ and let $(K^{p,q}, d,\delta)$ be the first quadrant double complex defined by 
\[
K^{p,q}=C(G^{q+1};C^p)^G ,\text{ for } p,q\geq 0,
\]
where $C^p=C((\dH)^{p+1};\mathbb{R})$, the first differential $d:K^{p,q}\to K^{p,q+1}$ is the homogeneous coboundary operator and the second differential $\delta:K^{p,q}\to K^{p+1,q}$ is induced by the homogeneous coboundary operator $\delta:C^p\to C^{p+1}$. To such a complex one can associate two spectral sequences, $\eI_r^{p,q}$ and $\eII_r^{p,q}$, both converging to the cohomology of the total complex (see for example \cite[Chapter III, \textsection 14]{BT}).  If $\alpha\in C((\partial\mathbb{H}^n)^{p+1};\mathbb{R})^G$ is a cocycle then 
 \[
 \delta\alpha(y,x_0,\dots,x_p)=0,
 \]
 for almost all $(y,x_0,\dots,x_p)$. By the exponential law, there exists a $y\in\dH$ such that $\delta\alpha(y,x_0,\dots,x_p)=0$ for almost all $(x_0,\dots,x_p)$. Now define
\[
\beta(x_0,\dots,x_{p-1}):=\alpha(y,x_0,\dots,x_{p-1}).
\]
Then
\[
\delta\beta(x_0,\dots,x_p)=\sum_j (-1)^j\alpha(y,x_0,\dots, \hat{x}_j,\dots,x_p)=\alpha(x_0,\dots,x_p),
\]
for almost all $(x_0,\dots,x_p)$. Thus $(C^*,\delta)$ is an acyclic cocomplex and therefore
\begin{eqnarray*}
\eII_1^{p,q}=\begin{cases} C(G^{q+1};\mathbb{R})^G,&\text{if } p=0;\\
0, &\text{otherwise,}\end{cases}
\end{eqnarray*}
which implies that the second page of $\eII_r^{p,q}$ is 
\begin{eqnarray*}
\eII_2^{p,q}=\begin{cases} H^q_m(G;\mathbb{R}),&\text{if } p=0;\\
0, &\text{otherwise.}\end{cases}
\end{eqnarray*}
Hence $\eII_2^{p,q}=\eII_r^{p,q}$ for every $r\geq 2$ and we obtain on the one hand that $\eII_r^{p,q}$ converges to $H_m^*(G;\mathbb{R})$ which is isomorphic to the continuous cohomology $H^*_c(G;\mathbb{R})$ of $G$. On the other hand, we establish
\begin{Pro}
\label{ver}
The spectral sequence $\eI_r^{p,q}$ converges to the cohomology group $H_m^p(G\acts\dH;\mathbb{R})$.

\end{Pro}
\begin{proof}
By definition the first page $\eI_1^{p,q}$ is given by $H_d(K^{p,q})=H^q_m(G;C^p)$. Let $G_\infty=(\mathbb{R}_{>0}\times\mathrm{SO}(n-1))\ltimes\mathbb{R}^{n-1},G_{\infty,0}=\mathbb{R}_{>0}\times\mathrm{SO}(n-1)$ and $G_{\infty,0,1}=\mathrm{SO}(n-2)$ be the stabilizers of respectively $\{\infty\}, \{\infty,0\}$ and $\{\infty,0,1\}$, where these are viewed as points in the upper half space model of $\Hn$. Note that $G/G_\infty\cong\partial\mathbb{H}^n$, and furthermore $G/G_{\infty,0}\hookrightarrow \partial\mathbb{H}^n\times\partial\mathbb{H}^n$ and $G/G_{\infty,0,1}\hookrightarrow \partial\mathbb{H}^n\times\partial\mathbb{H}^n\times\partial\mathbb{H}^n$ as conull sets. By the Eckmann-Shapiro Lemma it then directly follows that 
\begin{align*}
\eI_1^{0,q}=H^q_m(G;C^0) &\cong H_m^q(G_\infty;\mathbb{R}),\\
\eI_1^{1,q}=H^q_m(G;C^1) &\cong H_m^q(G_{\infty,0};\mathbb{R}),\\
\eI_1^{2,q}=H^q_m(G;C^2) &\cong H_m^q(G_{\infty,0,1};\mathbb{R}), \mbox{ and}\\
\eI_1^{p,q}=H^q_m(G;C^p)&\cong H_m^q(G_{\infty,0,1}; C^{p-3}), \mbox{ for } p>2. 
\end{align*}
Since for $\mathbb{R}$-coefficients measurable and continuous cohomology coincide it follows that $\eI_1^{0,q}\cong H_c^q(G_\infty;\mathbb{R})$. By the van Est isomorphism \cite[Chapter IX, Corollary 5.6]{BW} for any connected Lie group $G$ we have that $H^*_c(G;\mathbb{R})\cong H^*(\Omega^*(G/K)^G)$, where $K$ is a maximal compact subgroup of $G$ and where $\Omega^q(G/K)^G$ denotes the set of $G$-invariant real differential $q$-forms on $G/K$. Note that the van Est isomorphism used here is more general than the best known version for semisimple Lie groups since here $G$ is not assumed to be semisimple and therefore the differentials can be non-zero.  
\\
Thus, since a maximal compact subgroup of $G_\infty$ is $\mathrm{SO}(n-1)$, the cohomology group $H^*_c(G_\infty;\mathbb{R})$ can be computed using the complex of multi-linear alternating $\mathrm{SO}(n-1)$-invariant maps $(\mathbb{R}\times\mathbb{R}^{n-1})^q\to \mathbb{R}$, where $\mathrm{SO}(n-1)$ acts on the $\mathbb{R}^{n-1}$ factor. Let $T:\mathbb{R}\times\mathbb{R}^{n-1}\to\mathbb{R}$ be the projection onto the first factor and let $\det:(\mathbb{R}\times\mathbb{R}^{n-1})^{n-1}\to \mathbb{R}$ be the determinant defined on the second factor, i.e.
\begin{align*}
T:(t,v)&\mapsto t,\\
\det:\big((t_1,v_1),\dots,(t_{n-1},v_{n-1})\big)&\mapsto \det(v_1,\dots,v_{n-1}),
\end{align*}
where $t,t_1,\dots,t_{n-1}\in\mathbb{R}$ and $v,v_1,\dots,v_{n-1}\in\mathbb{R}^{n-1}$. 
Then, up to scalar multiplication, the only nonzero alternating forms are constant maps in degree $0$, the form $T$ in degree $1$, the form $\det$ in degree $n-1$ and $T\wedge\det$ in degree $n$. At the end of this section we will prove 

\begin{Le}
\label{differential}
Let $\det\in\Omega^{n-1}(G_\infty/K)^{G_\infty}$ be the form defined above. Then $d(\det)=(1-n)\cdot T\wedge\det$ . 
\end{Le}

It then follows that

\begin{eqnarray*}
H_c^q(G_\infty;\mathbb{R})\cong\begin{cases} \mathbb{R}, &\text{if } q=0,1;\\
0, &\text{otherwise.}\end{cases}
\end{eqnarray*}
Furthermore, a maximal compact of $G_{\infty,0}$ is also $\mathrm{SO}(n-1)$ and thus
\begin{eqnarray*}
\eI_1^{1,q}\cong H^q\left(\Omega^*((\mathbb{R}_{>0}))^{\mathrm{SO}(n-1)}\right)\cong\begin{cases} \mathbb{R}, &\text{if } q=0,1;\\
0, &\text{otherwise,}\end{cases}
\end{eqnarray*}
and, since $\mathrm{SO}(n-2)$ is compact,
\begin{eqnarray*}
\eI_1^{2,q}\cong H_c^q(\mathrm{SO}(n-2);\mathbb{R})\cong\begin{cases}\mathbb{R}, &\text{if q=0;}\\
0, &\text{otherwise.}\end{cases}
\end{eqnarray*}
Hence the first page of $\eI_1^{p,q}$ is as follows:
\begin{figure}[h!]
\includegraphics[scale=0.85]{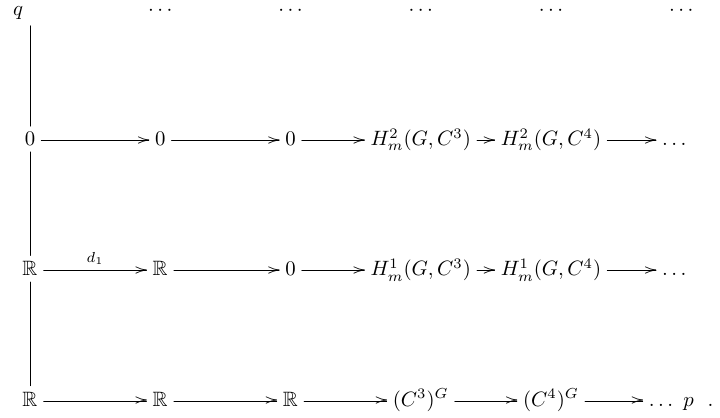}
\label{E_1}
\end{figure}
\newline
We finish the proof of Proposition \ref{ver} using the two following propositions which we will prove in the next two sections. 
\begin{Pro}
\label{d_1}
The map $d_1:\eI_1^{0,1} \to \eI_1^{1,1}$ is an isomorphism.
\end{Pro}
\begin{Pro}
\label{E_2}
$\eI_2^{p,q}=0$ for $p>2$ and $q>0$.
\end{Pro}
It then follows that the spectral sequence degenerates at the second page, that is 
\begin{eqnarray*}
\eI_2^{p,q}= \begin{cases}H_m^p(G\acts\partial\mathbb{H}^n;\mathbb{R}), &\text{if } q=0;\\
0, &\text{otherwise,}\end{cases}
\end{eqnarray*}
which proves Proposition \ref{ver}. 
\end{proof}

Hence $H_c^p(G;\mathbb{R})$ is isomorphic to $H_m^p(G\acts\partial\mathbb{H}^n;\mathbb{R})$ and thus measurably realized on the boundary. This proves the first part of Theorem \ref{iso}. For the second part, note that a locally totally bounded cocycle $(\dH)^{p+1}\to \mathbb{R}$ is essentially bounded. Indeed, since $(\dH)^{p+1}$ is itself compact, a cocycle that sends compact subsets of $(\dH)^{p+1}$ to precompact subsets of $\mathbb{R}$ is bounded a.e. Thus Proposition \ref{bounded} below finishes the proof of Theorem \ref{iso}.
\par
\begin{Pro}
\label{bounded}
Let $A$ be a Polish Abelian $G$-module. Any $G$-invariant cocycle $\alpha:(\partial\mathbb{H}^n)^{p+1}\to A$ is cohomologous to a locally totally bounded cocycle. That is, there exists a $G$-invariant cochain $\sigma:(\partial\mathbb{H}^n)^p\to A$ such that $\kappa=\alpha+\delta\sigma$ is a locally totally bounded cocycle.  
\end{Pro}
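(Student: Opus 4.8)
The plan is to prove this by dimension-shifting induction on $p$, mimicking the strategy of \cite[Proposition 33]{AM} but working on the boundary $\dH$ rather than on $G$ itself. The base case $p=0$ is essentially trivial: a $G$-invariant map $\alpha:\dH\to A$ is constant on $G$-orbits, and since $G$ acts transitively on $\dH$ (with $G/G_\infty\cong\dH$), the map $\alpha$ factors through a single orbit, so its essential image is a single point (up to the $G_\infty$-action on $A$), which is trivially precompact. For the inductive step I would exploit the acyclicity of the complex $(C^*,\delta)$ established just before Proposition \ref{ver}: a $G$-invariant cocycle $\alpha:(\dH)^{p+1}\to A$ can be written as $\alpha=\delta\beta$ where $\beta(x_0,\dots,x_{p-1})=\alpha(y,x_0,\dots,x_{p-1})$ for a suitably chosen basepoint $y\in\dH$. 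The key point is that $\beta$ is invariant under the stabilizer $G_y$ of $y$, which (conjugating $y$ to $\infty$) is $G_\infty$, and $\beta$ may be regarded as a $G_\infty$-invariant cocycle on $(\dH\setminus\{y\})^p$ for the restricted action.

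The heart of the argument is the following dimension-shifting mechanism. Fixing the basepoint $y=\infty$, one identifies the cochains on $\dH$ fixing $\infty$ with cochains valued in an induced-type module: a $G$-invariant cocycle $\alpha$ on $(\dH)^{p+1}$ corresponds, via the decomposition $\alpha=\delta\beta$, to a lower-degree $G_\infty$-invariant cochain $\beta$ on $(\dH\setminus\{\infty\})^p\cong(\mathbb{R}^{n-1})^p$, with coefficients pushed into a module of the form $C(\dH,A)$ or its quotient by constants. This is exactly the analogue of the connecting map $Q$ and the isomorphism $H^p_m(G,A)\cong H^{p-1}_m(G,C(G,A)/\iota(A))$ recalled in the preliminaries, but transported to the boundary via the Eckmann-Shapiro identifications $H^q_m(G,C^p)\cong H^q_m(G_{\infty,0,1},C^{p-3})$. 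I would then invoke the lifting result: by the Borel selection theorem together with \cite[Lemma 32]{AM}, a locally totally bounded representative in the quotient module lifts to a locally totally bounded representative upstairs. Applying the inductive hypothesis to $\beta$ gives a locally totally bounded representative of $\beta$, and then $\delta$ applied to the lift produces a locally totally bounded cochain cohomologous to $\alpha$; concretely, the corrected cocycle is $\kappa=\alpha+\delta\sigma$ where $\sigma$ is assembled from the lift.

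The main obstacle is precisely the one flagged in the introduction: one cannot integrate over the (non-locally-convex) F-space coefficients $C^{p-3}$, so the usual averaging construction of a coboundary over the compact group $\mathrm{SO}(n-2)=G_{\infty,0,1}$ is unavailable. The entire point of passing to locally totally bounded representatives is to sidestep this: total boundedness is the substitute for the compactness one would otherwise gain by integration. I would therefore need to be careful at each stage that the operations involved — restriction to a fixed basepoint, the Eckmann-Shapiro transport, and especially the lifting step — preserve local total boundedness, and that the chosen Borel section $s:G/H\to G$ used throughout is itself locally totally bounded (which is guaranteed by the section statement recalled in the preliminaries). A secondary technical point is verifying that the basepoint $y$ can be chosen measurably and that the resulting $\beta$ genuinely lands in a module to which the inductive hypothesis applies; this is where I expect the bookkeeping with the quotient-by-constants module $C(\dH,A)/\iota(A)$ to require the most care.
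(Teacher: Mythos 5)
Your overall route is the paper's own: a dimension-shifting induction on $p$ in which one boundary variable is traded for coefficients of the form $C(G,A)$, the passage to the quotient by the constants $\iota(A)$ is where the shifted cochain becomes a cocycle, the inductive hypothesis is applied there, a lift preserving local total boundedness is chosen, and $\delta$ of the lift is the desired representative $\kappa$. But two of your steps are incorrect as written. The base case: you claim that a $G$-invariant map $\alpha:\partial\mathbb{H}^n\to A$ has precompact essential image because $G$ acts transitively on $\partial\mathbb{H}^n$. For a general Polish Abelian $G$-module $A$ this is false: invariance means $\alpha(gx)=g\cdot\alpha(x)$, so the essential image is a $G$-orbit in $A$, and orbits in a Polish module with nontrivial $G$-action need not be precompact. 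The generality of $A$ is not negotiable here, since your own induction feeds the nontrivial module $C(G,A)/\iota(A)$ back in as coefficients. What makes the base case work is the cocycle condition, which you never invoke: $\delta\alpha=0$ forces $\alpha$ to be essentially constant, and a constant is trivially locally totally bounded; this is the paper's one-line argument.

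The inductive step: your $\beta(x_0,\dots,x_{p-1})=\alpha(y,x_0,\dots,x_{p-1})$ satisfies $\delta\beta=\alpha$, so it is a primitive of $\alpha$, not a cocycle, and the inductive hypothesis --- a statement about cocycles --- cannot be ``applied to $\beta$'' as you propose. Freezing the basepoint $y$ also destroys $G$-invariance (only $G_\infty$-invariance survives), whereas the hypothesis requires $G$-invariant cocycles on $(\partial\mathbb{H}^n)^p$. The paper's map $Q$ repairs both defects simultaneously by keeping the extra variable instead of evaluating at a basepoint: $Q\alpha(x_0,\dots,x_{p-1})(g)=(-1)^p\alpha(x_0,\dots,x_{p-1},gG_\infty)$ is $G$-invariant, and since $\delta(Q\alpha)=\iota(\alpha)$ is constant in $g$, the image $\overline{Q\alpha}$ in $C((\partial\mathbb{H}^n)^p,C(G,A)/\iota(A))^G$ is a genuine cocycle to which the inductive hypothesis applies. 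You do name the quotient by constants and the lifting lemma, so the correct mechanism is in sight, but your plan applies the hypothesis to the wrong object (and your side remarks about the identification $H^q_m(G,C^p)\cong H^q_m(G_{\infty,0,1},C^{p-3})$ belong to the spectral-sequence argument, not to this proposition). With these points corrected, the endgame runs as you describe: lift $\overline{\beta}$ to a locally totally bounded $G$-invariant $\beta$, get $\alpha=\delta\beta+\delta\sigma$ with $\delta\beta$ valued in $\iota(A)\cong A$, and conclude that $\kappa:=\delta\beta$ is locally totally bounded because $\delta\beta(L^{p+1})$ lies in a finite alternating sum of the precompact sets $\beta(L^p)$.
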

\begin{proof}
This follows by a dimension-shifting argument. For $p=0$ cocycles correspond to constants in $A$ and are thus bounded. Suppose now that $p>0$ and define $Q:C((\partial\mathbb{H}^n)^{p+1};A)^G\to C((\partial\mathbb{H}^n)^p;C(G;A))^G$ by
\[
Q\alpha(x_0,\dots,x_{p-1})(g)=(-1)^p\alpha(x_0,\dots,x_{p-1},gG_\infty).
\]
Then
\begin{eqnarray*}
\delta(Q\alpha)(x_0,\dots,x_p)(g)=\alpha(x_0,\dots,x_p),
\end{eqnarray*}
is independent of $g$. Let $\iota(A)$ be the image of the embedding of $A$ into $C(G;A)$ as the closed submodule of constant maps which we identify with $A$. The image $\overline{Q\alpha}$ of $Q\alpha$ in $C((\partial\mathbb{H}^n)^p;C(G;A)/\iota(A))^G$ defines a cocycle. By the induction hypothesis its cohomology class has a representative that is locally totally bounded, i.e.
\[
\overline{Q\alpha}=\overline{\beta}+ \delta\overline{\gamma},
\]
with $\overline{\beta}:(\partial\mathbb{H}^n)^p\to C(G;A)/\iota(A)$ a $G$-invariant locally totally bounded cocycle and $\overline{\gamma}:(\partial\mathbb{H}^n)^{p-1}\to C(G;A)/\iota(A)$ a $G$-invariant cochain. It follows from Lemma \ref{Borel section} that there exists $G$-invariant measurable lifts $\beta:(\dH)^p\to C(G;A)$ and $\gamma:(\dH)^{p-1}\to C(G;A)$ of $\overline{\beta}$ and $\overline{\gamma}$ such that $\beta$ is still locally totally bounded (but no longer a cocycle). Thus
\[
Q\alpha=\beta +\delta\gamma+\sigma,
\]
with $\sigma:(\partial\mathbb{H}^n)^p\to A$ a $G$-invariant measurable cochain. We have $\delta (Q\alpha)=\alpha$ and thus
\[
\alpha=\delta\beta+\delta\sigma.
\]
Hence $\delta\beta$ is a cocycle that takes its values in $\iota(A)\subset C(G;A)$. Therefore $\kappa:=\delta\beta$ can be seen as a cocycle taking its values in $A$ and $\kappa$ is a representative for the cohomology class of $\alpha$. Furthermore, for any compact $L\subset \partial\mathbb{H}^n$:
\[
\kappa(L^{p+1})\subset \beta(L^{p})-\beta(L^{p})+\dots+(-1)^p\beta(L^p),
\]
and thus $\kappa(L^{p+1})$ is precompact as a subset of $A\subset C(G;A)$. It follows that $\kappa$ is a locally totally bounded representative for the cohomology class of $\alpha$. 
\end{proof}

\begin{proof}[Proof of Lemma \ref{differential}]
For $\omega\in\Omega^{n-1}(G_\infty/K)$ and $X_0,\dots,X_{n-1}\in T(G_\infty/K)$ there is the following formula for $d\omega$ (see for example \cite[V.2]{Br})
\begin{align*}
d\omega (X_0,\dots,X_{n-1})=&\sum_{i=0}^{n-1}(-1)^i X_i(\omega(X_0,\dots,\hat{X}_i,\dots,X_{n-1}))\\
&+\sum_{i<j}(-1)^{i+j}\omega([X_i,X_j],X_0,\dots,\hat{X}_i,\dots,\hat{X}_j,\dots,X_{n-1}),
\end{align*}
We identify $G_\infty/K$ with $\mathbb{R}_{>0}\times\mathbb{R}^{n-1}$ by the map
\[
(\lambda, v)\mapsto (\lambda,v)K=\{(\lambda k, kv)\mid k\in K\}
\]
So in particular $K$ is identified with $(1,0)\in\mathbb{R}_{>0}\times\mathbb{R}^{n-1}$. 
Let $X_0,\dots,X_{n-1}\in T(G_\infty/K)$ be the constant vector fields defined by $(X_0)_p=(1,0)$ and $(X_i)_p=(0,e_i)$ for all $p\in G_\infty/K$ and $i=1,\dots, n-1$. Then

\begin{eqnarray*}
&&d(\det)_{K}((X_0)_{K},(X_1)_{K},\dots,(X_{n-1})_{K})\\
&&d(\det)_{(1,0)}\left((1,0),(0,e_1),\dots,(0,e_{n-1})\right)\\
&&=\frac{d}{dt}\bigg |_{t=0}\text{det}_{(1,0)+t(1,0)}\left((0,e_1),\dots,(e_{n-1},0)\right)\\
&&+\sum_{i=1}^{n-1}\frac{d}{dt}\bigg|_{t=0}\text{det}_{(1,0)+t(0,e_i)}\left((1,0),(0,e_1)\dots,(\widehat{0,e_i}),\dots,(0,e_{n-1})\right)\\
&&=\frac{d}{dt}\bigg|_{t=0}\text{det}_{(1+t,0)}\left((0,e_1),\dots,(0,e_{n-1})\right),
\end{eqnarray*}
where the last equality follows from $\det\left((1,0),\cdot,\dots,\cdot\right)=0$
\\
Since $\det$ is $G_\infty$-invariant, i.e. invariant under taking the pullback by the action of $G_\infty$ on $G_\infty/K$, we have 
\[
\text{det}_{(1+t,0)}\left((0,e_1),\dots,(0,e_{n-1}))=\text{det}_{g\cdot (1+t,0)}(g\cdot(0,e_1),\dots,g\cdot (0,e_{n-1})\right),
\] 
for all $g\in G_\infty$. 
Let $g=(\frac{1}{1+t}I_{n-1},0)\in G_\infty$. Then 
\[
g\cdot (1+t,0)=(\frac{1}{1+t}I_{n-1}\cdot(1+ t),0)=(1,0),
\] 
and
\[
g\cdot (0,e_i)=(0,\frac{1}{1+t}I_{n-1}\cdot e_i)=(0,\frac{1}{1+t}e_i).
\]
Hence
\begin{align*}
\text{det}_{(1+t,0)}\left((0,e_1),\dots,(0,e_{n-1})\right)=&\text{det}_{g\cdot (1+t,0)}\left(g\cdot(0,e_1),\dots,g\cdot (0,e_{n-1})\right)\\
=&\text{det}_{(1,0)}\left((0,\frac{1}{1+t}e_1),\dots,(0,\frac{1}{1+t}e_{n-1})\right)\\
=&\det\left(\frac{1}{1+t}e_1,\dots,\frac{1}{1+t}e_{n-1}\right)\\
=&\frac{1}{(1+t)^{n-1}}\det(e_1,\dots,e_{n-1})\\
=&\frac{1}{(1+t)^{n-1}}
\end{align*}
It follows that 
\begin{eqnarray*}
\frac{d}{dt}\bigg|_{t=0}\text{det}_{0+t(1,0)}\left((0,e_1),\dots,(0,e_{n-1})\right)&=&\frac{d}{dt}\bigg |_{t=0}\frac{1}{(1+t)^{n-1}}=1-n
\end{eqnarray*}
Since furthermore $T\wedge \det \left((1,0),(0,e_1),\dots,(0,e_{n-1})\right)=1$ the result follows. 
\end{proof}

\subsection{Proof of Proposition \ref{d_1}}
Recall that $G_\infty=(\mathbb{R}_{>0}\times\mathrm{SO}(n-1))\ltimes\mathbb{R}^{n-1}$ and $G_{\infty,0}=\mathbb{R}_{>0}\times\mathrm{SO}(n-1)$. Let 
\begin{equation*}
j^*:H^1_m(G;C(G/G_\infty;\mathbb{R}))\to H^1_m(G;C(G/G_{\infty,0};\mathbb{R}))
\end{equation*}
be the map induced by the natural surjection $G/G_{0,\infty}\twoheadrightarrow G/G_\infty$, i.e. on cochains
\begin{equation}
\label{j^*}
j^*(\alpha)(g_0,g_1)(gG_{\infty,0})=\alpha(g_0,g_1)(gG_\infty),
\end{equation}
for $\alpha\in C(G^2;C(G/G_{\infty};\mathbb{R}))^G$ and $g_0,g_1,g\in G$. By abuse of notation we will denote the map from $H^1_m(G;C(G/G_\infty;\mathbb{R}))$ to $H^1_m(G;C(G/G_{\infty,0};\mathbb{R}))$ that is induced by the differential $d_1:H^1_m(G;C^0)\to H^1_m(G;C^1)$ also by $d_1$. We will prove
\begin{Pro}
The map 
\[
d_1:H^1_m(G;C(G/G_\infty;\mathbb{R}))\to H^1_m(G;C(G/G_{\infty,0};\mathbb{R}))
\]
is equal to $2j^*$.
\end{Pro}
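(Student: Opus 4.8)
The plan is to compute both sides of the claimed identity explicitly on the cochain level and compare. Since we are working in degree $1$, where measurable cocycles on $G$ with $\mathrm{Ind}$-coefficients are automatically continuous by \cite[Theorem 3]{Mo3}, I can use the explicit cochain maps $\phi=u^n\circ v^n$ and $\psi$ from the Eckmann--Shapiro discussion in Section 2 to transport everything to cocycles on the stabilizer subgroups $G_\infty$ and $G_{\infty,0}$. The key observation is that by the van Est computation in Proposition \ref{ver}, the group $H^1_m(G,C(G/G_\infty,\mathbb{R}))\cong H^1_c(G_\infty,\mathbb{R})$ is one-dimensional, generated (via $\psi$ applied to the generator from van Est) by the class corresponding to the projection form $T$, i.e. essentially the logarithm of the $\mathbb{R}_{>0}$-coordinate. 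So it suffices to evaluate both $d_1$ and $j^*$ on a single explicit generator.

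First I would fix an explicit generating cocycle. Writing $G_\infty=(\mathbb{R}_{>0}\times\mathrm{SO}(n-1))\ltimes\mathbb{R}^{n-1}$, the van Est generator in degree $1$ is the homomorphism $G_\infty\to\mathbb{R}$ picking out the logarithm of the $\mathbb{R}_{>0}$-dilation factor; concretely a homogeneous $1$-cocycle $c(h_0,h_1)=\log t(h_1)-\log t(h_0)$ where $t(\cdot)$ is the dilation component. I would push this through $\psi$ to get an honest $G$-invariant cocycle representative on $C(G/G_\infty,\mathbb{R})$. Then $j^*$ is just precomposition with the quotient $G/G_{\infty,0}\twoheadrightarrow G/G_\infty$, which geometrically restricts the dilation-logarithm cocycle from the boundary $\partial\mathbb{H}^n$ to the space of ordered pairs of distinct boundary points.

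The second computation is the differential $d_1$. Here I would unwind the definition of $\delta:C^0\to C^1$ on the boundary, namely $(\delta f)(x_0,x_1)=f(x_1)-f(x_0)$, and trace how this interacts with the identification of $H^1_m(G,C^0)$ and $H^1_m(G,C^1)$ with cohomology of the stabilizers. The crucial point—and I expect this to be the main obstacle—is keeping track of the factor that arises when one compares the two different Eckmann--Shapiro identifications (one for $G_\infty$, acting on a single boundary point, and one for $G_{\infty,0}$, acting on a pair). The geometric source of the factor of $2$ should be that the relevant invariant function on pairs of boundary points is a Busemann-type cocycle symmetric in a way that doubles the dilation contribution: moving from the configuration $\{\infty\}$ to the configuration $\{\infty,0\}$ and symmetrizing over the two marked points contributes the dilation logarithm twice. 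I would make this precise by evaluating both resulting cocycles on a convenient one-parameter subgroup of dilations and reading off the ratio of the coefficients.

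Concretely, the endgame is a direct computation: express $d_1(\text{generator})$ and $j^*(\text{generator})$ as explicit $\mathrm{SO}(n-1)$-invariant alternating $1$-forms on the Lie algebra $\mathbb{R}\oplus\mathbb{R}^{n-1}$ of $G_{\infty,0}$ via van Est, where by the Proposition \ref{ver} computation the space of such forms is spanned by $T$. Both images are therefore scalar multiples of $T$, and the whole proposition reduces to showing the scalar for $d_1$ is exactly twice that for $j^*$. I expect the cleanest route is to avoid the abstract spectral-sequence bookkeeping and instead verify $d_1=2j^*$ by testing against the dilation subgroup directly, where the differential $\delta$ on the boundary cochain visibly produces $\log t(x_1)-\log t(x_0)$ against $j^*$'s single $\log t$, accounting for the factor $2$ once the antisymmetrization inherent in the homogeneous coboundary is taken into account.
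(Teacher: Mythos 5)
Your proposal is correct and follows essentially the same route as the paper: both reduce to evaluating $d_1$ and $j^*$ on the explicit generator $f_1(kA+v)=\ln k$ of $\mathrm{Hom}(G_\infty,\mathbb{R})$ via the explicit Eckmann--Shapiro maps $\phi,\psi$, and both locate the factor $2$ in the fact that the boundary coboundary $\delta f(x_0,x_1)=f(x_1)-f(x_0)$, evaluated along the orbit of the pair $(0,\infty)$, picks up the dilation logarithm with opposite signs at the two fixed points of the dilation subgroup. The paper merely packages your ``evaluate at the other marked point'' step as the involution $J$ swapping $0$ and $\infty$, proving $d_1=j^*-J^*\circ j^*$ and then that $J^*$ acts as $-1$ via the conjugation computation $f_2(JgJ)=-f_2(g)$ --- which is exactly the precise form of your Busemann-type sign flip, and is where the Eckmann--Shapiro section bookkeeping you defer must be carried out.
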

This proves Proposition \ref{d_1} since it implies in particular that $d_1$ is an isomorphism. Indeed, measurable cohomology and continuous cohomology agree in degree $1$ and thus
\[
H^1_m(G_\infty;\mathbb{R})=\text{Hom}(G_\infty;\mathbb{R})\cong\mathbb{R}
\]
and a generator is given by the homomorphism $f_1:G_\infty\to\mathbb{R}$ defined by
\[
f_1(kA+v)=\ln(k),
\]
where we write $kA+v$  for the element $((k,A),v)\in G_\infty=(\mathbb{R}_{>0}\times \mathrm{SO}(n-1))\ltimes \mathbb{R}^{n-1}$. Furthermore,
\[
H^1_m(G_{\infty,0};\mathbb{R})=\text{Hom}(G_{\infty,0};\mathbb{R})\cong\mathbb{R},
\]
and a generator is given by the homomorphism $f_2:G_{\infty,0}\to\mathbb{R}$ defined by
\begin{equation}
\label{f2}
f_2(kA)=\ln(k),
\end{equation}
for $kA\in G_{\infty,0}=\mathbb{R}_{>0}\times \mathrm{SO}(n-1)$. Under the Eckmann-Shapiro Lemma $j^*$ corresponds to the map $i^*:H^1_m(G_\infty;\mathbb{R})\to H^1_m(G_{\infty,0};\mathbb{R})$ induced by the natural inclusion $i:G_{\infty,0}\hookrightarrow G_\infty$. This is an isomorphism as it sends $f_1$ to $f_2$. 
Let $J\in\text{Isom}^+(\mathbb{H}^n)$ be a rotation by $\pi$ centered on a point on the geodesic between $0$ and $\infty$ so that $J(0)=\infty$, $J(\infty)=0$ and $J^{-1}=J$. In the upper half space model a possible formula for $J$ is
\begin{equation}
\label{J}
J:(x_1,\dots,x_n)\mapsto \frac{1}{|x|^2}(x_1,\dots,x_{n-2},-x_{n-1},x_n),
\end{equation}
 for $x=(x_1,\dots,x_n)\in \overline{\mathbb{H}^n}\setminus \{\infty\}$. Let
\[
 J^*:H^1_m(G;C(G/G_{\infty,0};\mathbb{R}))\to H^1_m(G;C(G/G_{\infty,0};\mathbb{R}))
\] 
be the isomorphism defined on cochains by
\begin{equation}
\label{J^*}
J^*(\alpha)(g_0,g_1)(gG_{\infty,0})=\alpha(g_0,g_1)(gJG_{\infty,0}),
\end{equation}
for $\alpha\in C(G^2;C(G/G_{\infty,0};\mathbb{R}))^G$ and $g_0,g_1,g\in G$. 
Let 
\[
\psi_\infty:C(G^2;C(G/G_\infty;\mathbb{R}))^G\to C(G^2;C(\partial\mathbb{H}^n;\mathbb{R}))^G
\]
be the isomorphism defined by  
\begin{equation*}
\label{psi.infty}
\psi_\infty(\beta)(g_0,g_1)(x)=\beta(g_0,g_1)(gG_\infty), 
\end{equation*}
for $\beta\in C(G^2;C(G/G_\infty;\mathbb{R}))^G$, $g_0,g_1\in G$ and $x\in\partial\mathbb{H}^n$ and with $g\in G$ such that $g\cdot \infty=x$. 
Furthermore, let 
\[
\psi_{\infty,0}:C(G^2;C(\partial\mathbb{H}^n\times\partial\mathbb{H}^n;\mathbb{R}))^G\to C(G^2;C(G/G_{\infty,0};\mathbb{R}))^G
\]
be the isomorphism defined by
\begin{equation*}
\label{psi.infty,0}
\psi_{\infty,0}(\alpha)(g_0,g_1)(gG_{\infty,0})=\alpha(g_0,g_1)(g\cdot 0,g\cdot \infty),
\end{equation*}
for $\alpha\in C(G^2;C(\partial\mathbb{H}^n\times\partial\mathbb{H}^n;\mathbb{R}))^G$ and $g_0,g_1,g\in G$. We have the commutative diagram 
\begin{figure}[h!]
\center
\begin{tikzcd}
C(G^2; C(G/G_\infty;\mathbb{R}))^G\arrow{r}{d_1}\arrow{d}{\psi_\infty}
&C(G^2;C(G/G_{\infty,0};\mathbb{R}))^G\\
C(G^2;C^0)^G\arrow{r}{\delta}
&C(G^2;C^1)^G,\arrow{u}{\psi_{\infty,0}}
\end{tikzcd}
\label{diagramd_1}
\end{figure}
\newline
and furthermore, with $j^*$ and $J^*$ the maps defined in equation \eqref{j^*} and equation \eqref{J^*} respectively, we obtain 
\begin{Le}
$d_1=j^*-J^*\circ j^*$. 
\end{Le}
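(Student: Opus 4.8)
The plan is to prove the identity directly on cochains. Because we are in degree one, all cochains in question may be taken continuous, and the commutative square displayed above identifies the spectral-sequence differential $d_1$ with the boundary operator $\delta\colon C^0\to C^1$; precisely, it gives $d_1=\psi_{\infty,0}\circ\delta\circ\psi_\infty$. So I would fix $\beta\in C(G^2,C(G/G_\infty,\mathbb{R}))^G$ and chase it through these three maps, recording the output as a function on $G/G_{\infty,0}$, and then compare with the explicit formulas for $j^*$ and $J^*\circ j^*$.

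First, set $\gamma:=\psi_\infty(\beta)$, so that $\gamma(g_0,g_1)(g\cdot\infty)=\beta(g_0,g_1)(gG_\infty)$ for every $g\in G$; this is well defined since $G_\infty$ is exactly the stabilizer of $\infty$. Since $\delta\colon C^0\to C^1$ is the simplicial coboundary $\delta f(x_0,x_1)=f(x_1)-f(x_0)$, and $\psi_{\infty,0}$ evaluates at $(x_0,x_1)=(g\cdot 0,g\cdot\infty)$, I obtain
\[
d_1(\beta)(g_0,g_1)(gG_{\infty,0})=\gamma(g_0,g_1)(g\cdot\infty)-\gamma(g_0,g_1)(g\cdot 0).
\]

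The geometric input enters in rewriting the two terms. The first is $\gamma(g_0,g_1)(g\cdot\infty)=\beta(g_0,g_1)(gG_\infty)$. For the second I would use that $J$ was chosen with $J\cdot\infty=0$, whence $gJ\cdot\infty=g\cdot 0$ and therefore $\gamma(g_0,g_1)(g\cdot 0)=\beta(g_0,g_1)(gJG_\infty)$, all read as identities of cosets. Thus
\[
d_1(\beta)(g_0,g_1)(gG_{\infty,0})=\beta(g_0,g_1)(gG_\infty)-\beta(g_0,g_1)(gJG_\infty).
\]
On the other hand, the pullback along $G/G_{\infty,0}\twoheadrightarrow G/G_\infty$ gives $j^*(\beta)(g_0,g_1)(gG_{\infty,0})=\beta(g_0,g_1)(gG_\infty)$, while by definition of $J^*$ one has $(J^*\circ j^*)(\beta)(g_0,g_1)(gG_{\infty,0})=j^*(\beta)(g_0,g_1)(gJG_{\infty,0})=\beta(g_0,g_1)(gJG_\infty)$. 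Subtracting recovers exactly the displayed expression for $d_1(\beta)$, proving $d_1=j^*-J^*\circ j^*$.

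There is no conceptual obstacle here; the proof is a direct computation. The only points that deserve care are bookkeeping ones: that the square may legitimately be read at the cochain level (which is why the reduction to continuous $1$-cochains matters), and that each evaluation such as $g\cdot 0=gJ\cdot\infty$ is an equality of cosets in $G/G_\infty$ rather than merely of boundary points, so that the genuine function $\beta(g_0,g_1)$ on $G/G_\infty$ can be evaluated there. Once these identifications are made explicit, matching the four terms is immediate.
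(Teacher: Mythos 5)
Your proof is correct and is essentially the same as the paper's: both identify $d_1$ with $\psi_{\infty,0}\circ\delta\circ\psi_\infty$ via the commutative square, evaluate the coboundary at $(g\cdot 0,g\cdot\infty)$, and use $gJ\cdot\infty=g\cdot 0$ to rewrite the second term as $\beta(g_0,g_1)(gJG_\infty)$, matching it with $J^*\circ j^*$. Your explicit remarks about reading the identifications at the level of cosets make precise what the paper leaves implicit, but the argument is the same computation.
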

\begin{proof}
By definition the map $d_1:H^1_m(G;C^0)\to H^1_m(G;C^1)$ is induced by  $\delta:C^0\to C^1$, i.e. for $[\sigma]\in H^1_m(G;C^0), g\in G$ and $x_0,x_1\in\partial \mathbb{H}^{n}$
\[
d_1[\sigma]=[\delta\circ\sigma], 
\]
where
\[
(\delta\circ\sigma)(g_0,g_1)(x_0,x_1)=\sigma(g_0,g_1)(x_1)-\sigma(g_0,g_1)(x_0). 
\]
For $\sigma\in C(G^2;C(G/G_\infty;\mathbb{R}_{>0}))$ and $g_0,g_1,g\in G$ we have 
\begin{eqnarray*}
\psi_{\infty,0}\circ\delta\circ\psi_\infty(\sigma)(g_0,g_1)(gG_{\infty,0})&=& \delta\circ\psi_\infty(\sigma)(g_0,g_1)(g\cdot 0, g\cdot\infty)\\
&=&\psi_\infty(\sigma)(g_0,g_1)(g\cdot\infty)\\
&&-\psi_\infty(g_0,g_1)(g\cdot 0)\\
&=&\sigma(g_0,g_1)(gG_\infty)-\sigma(g_0,g_1)(g\cdot J G_\infty)\\
&=&j^*(\sigma)(g_0,g_1)(gG_{\infty,0})\\
&&- j^*(\sigma)(g_0,g_1)(g\cdot J G_{\infty,0})\\
&=&j^*(\sigma)(g_0,g_1)(gG_{\infty,0})\\
&&-J^*\circ j^*(\sigma)(g_0,g_1)(gG_{\infty,0}),
\end{eqnarray*}
and it thus follows that $d_1=j^*-J^*\circ j^*$. 
\end{proof}
Since continuous cohomology and measurable cohomology coincide in degree $1$ we can and will from now on work with continuous cochains. For such cochains an isomorphism $\varphi:C_c(G_{\infty,0}^2;\mathbb{R})^{G_{\infty,0}}\to C_c(G_{\infty,0};\mathbb{R})$ between the homogeneous and inhomogeneous resolution is given by
\begin{eqnarray*}
\varphi(\beta)(g)&=&\beta(e,g)  \text{, with inverse}\nonumber\\
\label{varphi}
\varphi(\sigma)^{-1}(g_0,g_1)&=&g_0\cdot\sigma(g_0^{-1}g_1).
\end{eqnarray*}
\begin{Le}
$J^*$ acts as $-1$ on $H^1_m(G_{\infty,0};\mathbb{R})$.
\end{Le}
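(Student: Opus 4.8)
The plan is to identify, via the Eckmann--Shapiro isomorphism, the automorphism of $H^1_m(G_{\infty,0},\mathbb{R})$ induced by $J^*$, and then to evaluate it on the generating homomorphism $f_2$. Write $H=G_{\infty,0}$, and recall that $H$ is precisely the subgroup fixing both $0$ and $\infty$, whereas $J$ interchanges these two points. The first observation is that $J$ normalizes $H$: if $h\in H$ fixes $0$ and $\infty$, then $J^{-1}hJ$ fixes $J^{-1}(0)=\infty$ and $J^{-1}(\infty)=0$ (using $J=J^{-1}$), so it again lies in $H$. Thus $c_J\colon h\mapsto J^{-1}hJ$ is an automorphism of $H$, and I claim that under Eckmann--Shapiro $J^*$ corresponds to the pullback $c_J^*$ on $H^1_m(H,\mathbb{R})$.

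To verify this claim I would trace $J^*$ through the explicit cochain maps $\psi$ and $\phi=u^n\circ v^n$ recalled in Section 2, computing $\phi\circ J^*\circ\psi$. The computation simplifies drastically once we choose the Borel section $s$ so that $s(eH)=e$ and $s(JH)=J$. For $h\in H$, the coset $h^{-1}JH$ corresponds under $G/H\hookrightarrow\partial\mathbb{H}^n\times\partial\mathbb{H}^n$ to the pair $h^{-1}J\cdot(0,\infty)=h^{-1}\cdot(\infty,0)=(\infty,0)$, since $h$ fixes $0$ and $\infty$; hence $h^{-1}JH=JH$ and $s(h^{-1}JH)=J$. Consequently
\[
\lambda(h,JH)=s(JH)^{-1}\,h\,s(h^{-1}JH)=J^{-1}hJ=c_J(h),
\]
and unwinding the definitions of $\psi$ and $\phi$ (with $s(eH)=e$) gives $\phi\bigl(J^*\psi(\alpha)\bigr)(h_0,h_1)=\alpha(c_J(h_0),c_J(h_1))$, which is exactly $c_J^*\alpha$. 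This establishes the claim.

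It then remains to evaluate $c_J^*$ on the generator. In degree one, $H^1_m(H,\mathbb{R})=\mathrm{Hom}(H,\mathbb{R})$ and $c_J^*$ is precomposition with $c_J$, so $(c_J^*f_2)(kA)=f_2(J^{-1}(kA)J)$. Taking the explicit $J$ to be inversion $x\mapsto x/|x|^2$ on the boundary $\mathbb{R}^{n-1}\cup\{\infty\}$, which swaps $0$ and $\infty$ and satisfies $J(kx)=k^{-1}J(x)$, one sees that $J$ conjugates the dilation by $k$ into the dilation by $k^{-1}$; hence $J^{-1}(kA)J$ is again a dilation-rotation but with dilation factor $k^{-1}$ (the $\mathrm{SO}(n-1)$-part is irrelevant to $f_2$). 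Therefore $f_2(J^{-1}(kA)J)=\ln(k^{-1})=-\ln k=-f_2(kA)$, so $J^*=c_J^*$ acts as $-1$ on $H^1_m(G_{\infty,0},\mathbb{R})=\mathbb{R}\,f_2$. The one delicate point is the bookkeeping matching $J^*$ with conjugation, namely the identity $h^{-1}JH=JH$ and the resulting formula $\lambda(h,JH)=J^{-1}hJ$; once this is in place, the sign comes immediately from the standard fact that an inversion inverts dilation factors.
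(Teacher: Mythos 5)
Your proof is correct and follows essentially the same route as the paper: you transport $J^*$ through the explicit Eckmann--Shapiro cochain maps $\phi,\psi$ to identify it with pullback by the conjugation $h\mapsto J^{-1}hJ$ (your normalization $s(JG_{\infty,0})=J$ cleanly replaces the paper's bookkeeping with $h_2=Js(JG_{\infty,0})$ and the conjugation-invariance of homomorphisms into $\mathbb{R}$), and then observe that this conjugation inverts the dilation factor, so $f_2\mapsto -f_2$. The one inaccuracy is your choice of explicit representative: the pure inversion $x\mapsto x/|x|^2$ is orientation-reversing, hence lies outside $G=\text{Isom}^+(\mathbb{H}^n)$ and cannot be the $J$ of the statement; however, the paper's $J$ (inversion composed with the reflection negating the $(n-1)$-st coordinate) satisfies the same identity $J(kx)=k^{-1}J(x)$, and since the orthogonal part of $J^{-1}(kA)J$ is irrelevant to $f_2$, your computation applies to it verbatim.
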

\begin{proof}
Let $s:G/G_{\infty,0}\to G$ be a Borel section such that $s(G_{\infty,0})=e$. Let $\alpha\in \text{Hom}(G_{\infty,0};\mathbb{R}_{>0})$ be a cocycle, $h_1\in G_{\infty,0}$, and $g\in G$. Let $\phi$ and $\psi$ be the maps defined in equation \eqref{phi} and equation \eqref{psi} respectively. Then
\begin{eqnarray*}
&&\varphi\circ \phi\circ J^*\circ\psi\circ\varphi^{-1}(\alpha)(h_1)\\
&&=\phi\circ J^*\circ\psi\circ\varphi^{-1}(\alpha)(e,h_1)\\
&&= J^*\circ\psi\circ\varphi^{-1}(\alpha)\big(s(gG_{\infty,0}),s(gG_{\infty,0})h_1\big)(gG_{\infty,0})\\
&&=\psi\circ\varphi^{-1}(\alpha)(s(gG_{\infty,0}),s(gG_{\infty,0})h_1)(gJG_{\infty,0})\\
&&=\varphi^{-1}(\lambda(s(gG_{\infty,0}),gJG_{\infty,0}),\lambda(s(gG_{\infty,0})h_1, gJG_{\infty,0}))\\
&&=\alpha(\lambda(s(gG_{\infty,0}), gJG_{\infty,0})^{-1}\cdot\lambda(s(gG_{\infty,0})h_1,gJG_{\infty,0}))\\
&&=\alpha\big(s\big(s(gG_{\infty,0})^{-1}gJG_{\infty,0}\big)^{-1}\cdot h_1\cdot s\big((s(gG_{\infty,0})h_1)^{-1}gJG_{\infty,0}\big)\big)\\
&&=\alpha\big(s(h_2^{-1}g^{-1}gJG_{\infty,0})^{-1}\cdot h_1\cdot s(h_1^{-1}h_2^{-1}g^{-1}gJG_{\infty,0})\big)\\
&&=\alpha\big(s(h_2^{-1}JG_{\infty,0})^{-1}\cdot h_1\cdot s(h_1^{-1}h_2^{-1}JG_{\infty,0})\big)\\
&&=\alpha(s(JG_{\infty,0})^{-1}\cdot h_1\cdot s(JG_{\infty,0}))\\
&&=\alpha(h_3^{-1}J^{-1}h_1Jh_3)\\
&&=\alpha(Jh_1J),
\end{eqnarray*}
where $h_2=g^{-1}s(gG_{\infty,0})\in G_{\infty,0}$, $h_3= Js(JG_{\infty,0})\in G_{\infty,0}$ and we use that $s(hJG_{\infty,0})=s(hG_{\infty,0})\cdot(JG_{\infty,0})=s(JG_{\infty,0})$ for all $h\in G_{\infty,0}$. Thus $J^*$ acts by conjugation on $H^1_m(G_{\infty,0};\mathbb{R}_{>0})$. 
\\\\
Let $g\in G_{\infty,0}$, say $g=kA$ with $k>0$ and $A\in \mathrm{SO}(n-1)$ and let $x\in\overline{\mathbb{H}^n}\setminus \{\infty\}$. Recall the formula for $J$ given in equation \eqref{J} and note that $J(x)=\frac{1}{|x|^2}r(x)$ with 
\[
r:(x_1,\dots,x_{n-2},x_{n-1},x_n)\mapsto (x_1,\dots,x_{n-2},-x_{n-1},x_n)
\]
the reflection in the hyperplane orthogonal to the $(n-1)$th coordinate axis. Then
\begin{eqnarray*}
JgJ(x)&=&J\cdot \frac{k}{|x|^2}A \cdot r(x)\\
&=& \frac{1}{\left| \frac{k}{|x|^2}\cdot |A\cdot r(x)|\right|^2}\cdot  \frac{k}{|x|^2}rAr(x)\\
&=&  \frac{1}{\left| \frac{k}{|x|^2}\cdot |x|\right|^2}\cdot  \frac{k}{|x|^2}rAr(x)\\
&=&\frac{1}{k} rAr(x),
\end{eqnarray*}
Hence for the generator $f_2$ defined above in equation \eqref{f2} we obtain that
\begin{align*}
f_2(JgJ)&=f_2\left(\frac{1}{k}rAr\right)=\ln\left(\frac{1}{k}\right)=-\ln(k) =-f_2(kA) \\
&=-f_2(g).
\end{align*}

\end{proof}
\subsection{Vanishing of $\eI_2^{p,q}$ for $p>2$ and $q>0$}
In this section we give the proof of Proposition \ref{E_2}. Recall that $G=\text{Isom}^+(\Hn)$ and that $K^{p,q}=C(G^{q+1};C((\partial\mathbb{H}^n)^{p+1};\mathbb{R}))^G$ with differentials $\delta:K^{p,q}\to K^{p+1,q}$ and $d:K^{p,q}\to K^{p,q+1}$ given by the standard homogeneous coboundary operators. For a Polish Abelian $G$-module $A$ denote by $K^{p,q}(A)$ the $G$-module $C(G^{q+1};C((\partial\mathbb{H}^n)^{p+1};A))^G$. We will identify it with the $G$-module of $G$-invariant measurable maps $G^{q+1}\times(\partial\mathbb{H}^n)^{p+1}\to A$. We write $[[\alpha]_d]_\delta\in H_\delta H_d(K^{p,q}(A))$ if  a cochain $\alpha\in C(G^{q+1};C((\partial\mathbb{H}^n)^{p+1};A))^G$ satisfies $d\alpha=0$ and $\delta\alpha=d\gamma$, where $\gamma:G^q\times(\partial\mathbb{H}^n)^{p+2}\to A$. 
\begin{Pro}
\label{ltb2}
For all $p,q\in\mathbb{N}$, all Polish Abelian $G$-modules $A$ and all $[[\alpha]_d]_\delta\in H_\delta H_d(K^{p,q}(A))$ there exists a locally totally bounded representative $\kappa$ of $[[\alpha]_d]_\delta$. That is, there exist $G$-invariant measurable maps $\sigma:G^{q+1}\times(\partial\mathbb{H}^n)^{p}\to A$ and $\lambda:G^q\times (\partial\mathbb{H}^n)^{p+1}\to A$ such that 
\[
\kappa=\alpha+\delta\sigma+d\lambda:G^{q+1}\times (\partial\mathbb{H}^n)^{p+1}\to A
\]
is locally totally bounded. Furthermore, $\sigma$  can be chosen such that $d\sigma=0$.
\end{Pro}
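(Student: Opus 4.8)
\subsection*{Proof proposal}

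The plan is to prove this by a double induction that extends the dimension-shifting argument of Proposition \ref{bounded} to both directions of the double complex. The outer induction will be on the $G$-degree $q$, lowered by one through the embedding $\iota\colon A\hookrightarrow C(G,A)$ of $A$ as the constant maps, exactly as in the standard dimension shift recalled in Section 2; the base case $q=0$ is precisely Proposition \ref{bounded} (which is itself an induction on $p$ via the $\delta$-direction shift $Q$). Since $[[\alpha]_d]_\delta$ lies in $H_\delta H_d$, I may assume $d\alpha=0$ and $\delta\alpha=d\mu$ for some $\mu\in K^{p+1,q-1}(A)$, so that $\alpha$ is a genuine $d$-cocycle. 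This is what makes the reduction in $q$ the natural one.

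For the inductive step ($q\ge 1$) I would apply the $d$-direction shift $Q_d\colon K^{p,q}(A)\to K^{p,q-1}(C(G,A))$ given by $Q_d\alpha(g_0,\dots,g_{q-1})=(-1)^q\alpha(g_0,\dots,g_{q-1},\,\cdot\,)$, where the last $G$-variable is absorbed into the coefficients using the Fubini identification $C(G,C^p)\cong C((\dH)^{p+1},C(G,A))$. Because $\alpha$ is a $d$-cocycle one has $dQ_d\alpha=\iota\alpha$, and since $Q_d$ commutes with $\delta$, the image $\overline{Q_d\alpha}$ in $K^{p,q-1}(C(G,A)/\iota(A))$ again represents a class in $H_\delta H_d$ of the shifted complex, now in $G$-degree $q-1$ and with the Polish coefficient module $C(G,A)/\iota(A)$, so the inductive hypothesis applies. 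It yields a locally totally bounded representative $\overline\kappa=\overline{Q_d\alpha}+\delta\overline{\sigma'}+d\overline{\lambda'}$. Lifting $\overline\kappa$ to a locally totally bounded $\kappa'$ (and $\overline{\sigma'},\overline{\lambda'}$ to measurable $\sigma',\lambda'$) via the Borel selection theorem together with \cite[Lemma 32]{AM}, exactly as in \cite[Proposition 33]{AM}, I obtain $Q_d\alpha=\kappa'-\delta\sigma'-d\lambda'+\iota\tau_0$ with $\tau_0\in K^{p,q-1}(A)$; applying $d$ and using $dd\lambda'=0$ gives
\[
\iota\alpha \;=\; d\kappa'-d\delta\sigma'+\iota\,d\tau_0 .
\]

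The main obstacle is the cross term $d\delta\sigma'$. Applying the un-shift differential $d$ kills the same-direction correction $d\lambda'$ but not the $\delta$-direction correction, and this is exactly the phenomenon that is invisible in Bloch's case $n=3$, where $\eI_1^{p,q}$ vanishes for $q>0$ so that no $\delta$-correction is ever needed. Here $d\kappa'$ is locally totally bounded, whereas $d\delta\sigma'=\pm\delta(d\sigma')$ a priori is not: the correction $\sigma'$ cannot be taken locally totally bounded, since its difference from $\alpha$ is not locally totally bounded. To tame it I would run the secondary (inner) induction, now in the $\delta$-direction on $p$. The cochain $d\sigma'\in K^{p-1,q}(C(G,A))$ is a $d$-cocycle, and the task is to solve the equation $\delta\eta=\delta(d\sigma')$ with a locally totally bounded $\eta$ modulo $\delta$- and $d$-coboundaries; this is precisely the kind of statement produced by the $\delta$-dimension-shift $Q$ and lifting of Proposition \ref{bounded}. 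Descending in $p$ this way terminates at small $p$, handled directly as in the base case of Proposition \ref{bounded}. This secondary induction, interlocked with the primary one, is the genuine technical heart of the argument.

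Finally I would assemble the pieces. The displayed identity already forces $d\kappa'-d\delta\sigma'=\iota(\alpha-d\tau_0)$ to be $\iota$-valued, hence to descend to an $A$-valued cochain; the secondary step replaces $d\delta\sigma'$ by a locally totally bounded cochain modulo $A$-valued $\delta$- and $d$-coboundaries, so that this $\iota$-valued combination is in fact locally totally bounded. Reading off the identity then exhibits the resulting $\kappa$ in the form $\alpha+\delta\sigma+d\lambda$, with $\sigma,\lambda$ built from $\tau_0$ and the secondary correction. Its local total boundedness follows from that of $d\kappa'$ and of the cochain produced by the secondary induction, using that both $d$ and $\delta$ are finite alternating sums of face maps and hence send locally totally bounded cochains to locally totally bounded cochains, exactly as in the closing estimate of Proposition \ref{bounded}. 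The hard part throughout is keeping local total boundedness intact across every lift while controlling the cross term; the rest is the bookkeeping of the two commuting dimension shifts.
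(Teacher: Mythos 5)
Your proposal reproduces the paper's proof almost step for step up to the key identity: the outer induction on $q$ with Proposition \ref{bounded} as base case, the shift $Q_d$ with $dQ_d\alpha=\iota\alpha$, the passage to $C(G,A)/\iota(A)$-coefficients, the $G$-invariant lifts, and the identity $\iota\alpha=d\kappa'-d\delta\sigma'+\iota\, d\tau_0$ are exactly the paper's steps (in the paper's notation $\beta,\mu,\nu,\eta$ instead of $\kappa',\sigma',\lambda',\iota\tau_0$). The divergence is in the treatment of the cross term $d\delta\sigma'$, and there your argument has a genuine gap. First, you overlook the observation that makes the conclusion immediate: the inductive representative $\overline{\kappa}$ is a $d$-cocycle in the quotient complex (it represents a class in $H_\delta H_d$), so $d\kappa'$ is by \emph{itself} $\iota(A)$-valued, not merely the combination $d\kappa'-d\delta\sigma'$. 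Hence $d\kappa'$ descends to an $A$-valued, locally totally bounded $d$-cocycle, and this is the paper's $\kappa=d\beta$. What remains is only to recognize the $\iota(A)$-valued cochain $d\delta\sigma'=\pm\delta(d\sigma')$ as an allowable correction, i.e.\ of the form $\delta\sigma+d\lambda$ with $A$-valued $G$-invariant $\sigma,\lambda$; this is the paper's ``can be identified with a coboundary'' step, which can be justified by exactness of the $\delta$-rows with $C(G,A)/\iota(A)$-coefficients (an equivariant cone contraction inserting $g_0\cdot\infty$) together with $G$-invariant lifting --- tools already in play. No new induction is needed.

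Second, your substitute for this step --- the ``secondary (inner) induction on $p$,'' which you yourself call the technical heart --- is never formulated: you state no induction hypothesis, no base case, and no reason the scheme is well founded. This matters because the statement you need, namely that $d\delta\sigma'$ equals a locally totally bounded cochain plus $A$-valued $\delta$- and $d$-coboundaries, is an instance of Proposition \ref{ltb2} itself at the \emph{same} bidegree $(p,q)$: the cochain $\delta(d\sigma')$ is ($\iota(A)$-valued, hence $A$-valued, and) both a $d$-cocycle and a $\delta$-cocycle, so it defines a class in $H_\delta H_d(K^{p,q}(A))$, and invoking the proposition for it is circular. If instead, as your sketch suggests, one applies the proposition to the $d$-cocycle $d\sigma'$ at bidegree $(p-1,q)$, the coefficients there are $C(G,A)$, so the resulting decomposition is $C(G,A)$-valued; applying $\delta$ to it produces a new cross term $d(\delta\lambda'')$ of exactly the same nature as the one you started with, so the descent makes no visible progress toward $A$-valued data and there is no reason it terminates. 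Finally, the needed statement is not ``precisely the kind of statement produced by Proposition \ref{bounded}'': that proposition produces locally totally bounded representatives of $G$-invariant cocycles on $(\partial\mathbb{H}^n)^{p+1}$ (no $G$-variables at all), not solutions of $\delta\eta=\theta$ in the double complex. As written, the heart of your proof is an unproved, mischaracterized claim with a real risk of circularity.
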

\begin{proof}
We will prove Proposition \ref{ltb2} by induction on $q$ while allowing the module $A$ to vary, thereby proving it for all Abelian $G$-modules $A$. Suppose $q=0$. Then $d\alpha=0$ implies that the cocycle $\alpha:G\to C((\partial\mathbb{H}^n)^{p+1};\mathbb{R})$ is a constant function into $C((\partial\mathbb{H}^n)^{p+1};A)^G$. We identify $\alpha$ with this element of $C((\partial\mathbb{H}^n)^{p+1};A)^G$ and it thus follows from Proposition \ref{bounded} that Proposition \ref{ltb2} holds in degree $q=0$ for all Polish Abelian $G$-modules $A$. \\
Suppose now that $q>0$ and that for $q'<q$ the proposition is true for all Polish Abelian $G$-modules $A$. Let $\alpha\in C(G^{q+1};C((\partial\mathbb{H}^n)^{p+1};A))^G$ be such that $d\alpha=0$ and $\delta\alpha=d\gamma$, where $\gamma:G^q\times(\partial\mathbb{H}^n)^{p+2}\to A$. We can assume that $\delta\alpha=0$. Indeed, if $\delta\alpha=d\gamma\neq 0$ then for any $x\in\partial\mathbb{H}^n$ such that 
\[
\delta^2\alpha(g_0,\dots,g_q)(x_0,\dots,x_{p+1},x)=0
\]
for almost every $((g_0,\dots,g_q),(x_0,\dots,x_{p+1}))\in G^{q+1}\times(\partial\mathbb{H}^n)^{p+2}$ the cocycle
\begin{align*}
\alpha_x(g_0,\dots,g_q)(x_0,\dots,x_p):=&\alpha(g_0,\dots,g_q)(x_0,\dots,x_p)\\
										&+(-1)^p \delta\alpha(g_0,\dots,g_q)(x_0,\dots,x_p,x)
\end{align*}
is a representative of the class of $\alpha$ (since $\delta\alpha=d\gamma$ is a coboundary) and
\begin{align*}
\delta\alpha_x(g_0,\dots,g_q)(x_0,\dots,&x_{p+1})\\
										:=&\sum_{i=0}^{p+1}(-1)^i[\alpha(g_0,\dots,g_q)(x_0,\dots,\hat{x}_i,\dots,x_{p+1})\\
										&+(-1)^p \delta\alpha(g_0,\dots,g_q)(x_0,\dots,\hat{x}_i,\dots,x_{p+1},x)]\\
										=& \delta\alpha(g_0,\dots,g_q)(x_0,\dots,x_{p+1})\\
										&+(-1)^p\delta^2\alpha(g_0,\dots,g_q)(x_0,\dots,x_{p+1},x)\\
										&-(-1)^{p}\cdot(-1)^{p+2}\delta\alpha(g_0,\dots,g_q)(x_0,\dots,x_{p+1})\\
										=& 0.
\end{align*}

Now define the function $Q\alpha$ in $C(G^q;C((\partial\mathbb{H}^n)^{p+1};C(G;A)))^G$ by
\[
Q\alpha(g_0,\dots,g_{q-1})(x_0,\dots, x_p)(g):= (-1)^{q+1}\alpha(g_0,\dots,g_{q-1},g)(x_0,\dots,x_p).
\]
Then
\begin{align*}
d(Q\alpha)(g_0,\dots,g_q)(x_0,\dots,&x_p)(g)\\
=&\sum_{i=0}^q (-1)^i Q\alpha (g_0,\dots,\hat{g}_i,\dots, g_q)(x_0,\dots,x_p)(g)\\
=&\sum_{i=0}^q (-1)^{i+q+1}\alpha(g_0,\dots,\hat{g}_i,\dots,g_q,g)(x_0,\dots,x_p)\\
=&(-1)^{q+1} d\alpha(g_0,\dots,g_q,g)(x_0,\dots,x_p)\\
&-(-1)^{2q+1}\alpha(g_0,\dots,g_q)(x_0,\dots,x_p)\\
=&\alpha(g_0,\dots,g_q)(x_0,\dots,x_p),
\end{align*}
and we see that $d(Q\alpha)$ takes its values in $\iota(A)$ and therefore the image $\overline{Q\alpha}$ of $Q\alpha$ in  $C(G^q;C((\partial\mathbb{H}^n)^{p+1};C(G;A)/\iota(A)))^G$ is a cocycle with respect to the coboundary operator $d$. Furthermore,
\begin{align*}
\delta (Q\alpha)(g_0,\dots,g_{q-1})(x_0,\dots,&x_{p+1})(g)\\
 =&(-1)^{q+1} \delta\alpha(g_0,\dots,g_{q-1},g)(x_0,\dots,x_{p+1})\\
=&0
\end{align*}
Hence $\overline{Q\alpha}\in C(G^q;C((\partial\mathbb{H}^n)^{p+1};C(G;A)/\iota(A)))^G$ represents a cohomology class in $H_\delta H_d(K^{p,q-1}(C(G;A)/\iota(A)))$ and so by the induction hypothesis
\[
\overline{Q\alpha}=\bar{\beta}+\delta\bar{\mu}+d\bar{\nu},
\]
where $\bar{\beta}$ is a locally totally bounded cocycle and $d\bar{\mu}=0$. Then, by Lemma \ref{Borel section}, there exist $G$-invariant measurable lifts $\beta,\mu$ and $\nu$ of these maps such that
\[
Q\alpha=\beta +\delta\mu+d\nu +\eta,
\]
with $\eta:G^q\times (\partial\mathbb{H}^n)^{p+1}\to \iota(A)$ and such that $\beta$ is still locally totally bounded. We obtain
\[
\alpha=d(Q\alpha)=d\beta+d\delta\mu +d\eta=d\beta+\delta(d\mu)+d\eta.
\]
Note that since $\alpha$ takes values in $\iota(A)$ the right-hand side does as well. Also, since $d\bar{\mu}=0$, $d\mu$ takes values in $\iota(A)$ and hence $d\delta\mu=\delta(d\mu)$ can be identified with a coboundary in $C(G^{q+1};C((\partial\mathbb{H}^n)^{p+1};A))^G$. The map $\eta$ also takes its values in $\iota(A)$ and it thus follows that $\kappa:=d\beta$ is a locally totally bounded representative of the class of $\alpha$ in $H_\delta H_d(K^{p,q}(A))$.

\end{proof}
Let $K_c^{p,q}(A)$ be the $G$-module $C_c(G^{q+1};C((\partial\mathbb{H}^n)^{p+1};A))^G$ where $A$ is from now on a Fr\'echet $G$-module and let $K_c^{p,q}=K_c^{p,q}(\mathbb{R})$. Then
\begin{Pro}
\label{cocycles continuous}
$H_\delta H_d(K_c^{p,q}(A))\cong H_\delta H_d(K^{p,q}(A))$.
\end{Pro}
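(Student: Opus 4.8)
The comparison is induced by the inclusion of complexes $\iota\colon K_c^{\bullet,\bullet}(A)\hookrightarrow K^{\bullet,\bullet}(A)$, which commutes with both $d$ and $\delta$ and hence induces a map $\iota_*\colon H_\delta H_d(K_c^{p,q}(A))\to H_\delta H_d(K^{p,q}(A))$; the plan is to show that $\iota_*$ is an isomorphism. It is worth stressing at the outset why one cannot simply argue column by column, i.e. by showing $H_d(K_c^{p,q}(A))\cong H_d(K^{p,q}(A))$, equivalently $H_c^q(G,C((\partial\mathbb{H}^n)^{p+1},A))\cong H_m^q(G,C((\partial\mathbb{H}^n)^{p+1},A))$. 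The coefficient module $C((\partial\mathbb{H}^n)^{p+1},A)$ is a non-locally-convex F-space, so the Austin--Moore comparison \cite{AM}, which holds only for Fr\'echet coefficients, does not apply. This is precisely why the $\delta$-cohomology must be retained and Proposition \ref{ltb2} brought into play.

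The engine of the argument is a regularization in the group variables only. Using the Fubini isomorphism of Section~2, write $K^{p,q}(A)\cong C(G^{q+1}\times(\partial\mathbb{H}^n)^{p+1},A)$ and, for an approximate identity $(\rho_i)$ of nonnegative continuous compactly supported functions on $G$, define a smoothing operator $R_i$ that convolves in the coordinates $G^{q+1}$ while treating the boundary coordinates $(\partial\mathbb{H}^n)^{p+1}$ as inert parameters. Because the resulting integrals take values in the Fr\'echet space $A$, they are well defined even though $C((\partial\mathbb{H}^n)^{p+1},A)$ is not locally convex; this is the key point that sidesteps the obstruction above. Given a class in $H_\delta H_d(K^{p,q}(A))$, Proposition \ref{ltb2} provides a locally totally bounded representative $\kappa$, and local total boundedness guarantees that $R_i\kappa$ is continuous in the group variables, lies in $K_c^{p,q}(A)$, converges to $\kappa$ in measure, and is chain-homotopic to $\kappa$ for $d$ through a homotopy $h_i$ that, like $R_i$, touches only the group variables. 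Since $\delta$ acts solely on the boundary variables, both $R_i$ and $h_i$ commute with $\delta$ on the nose; it is this orthogonality of the regularization (in $G$) and $\delta$ (on the boundary) that lets the construction descend to $H_\delta H_d$.

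To turn this into the two halves of the isomorphism I would run an induction on $q$ parallel to the proof of Proposition \ref{ltb2}. The base case $q=0$ is immediate: there both $H_d(K^{p,0}(A))$ and $H_d(K_c^{p,0}(A))$ reduce to the $G$-invariant constants $C((\partial\mathbb{H}^n)^{p+1},A)^G$, since a $d$-cocycle in row $0$ is constant in the group variable and a constant map is automatically continuous, so the two columns coincide and so do their $\delta$-cohomologies (and Proposition \ref{bounded} applies verbatim). For the inductive step I would dimension-shift by the operator $Q$ of Proposition \ref{ltb2}, which lowers $q$ at the cost of replacing $A$ by $C(G,A)$ and passing to the quotient $C(G,A)/\iota(A)$, lift a continuous representative back through the locally totally bounded lift furnished by the Borel selection theorem and \cite[Lemma~32]{AM}, and then regularize the lift as above. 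The reason the induction survives this (non-Fr\'echet) change of coefficients is that the regularizing integral is always performed with values in the fixed Fr\'echet module $A$, with the inner group variable of $C(G,\cdot)$, the boundary variables, and the quotient all held as parameters.

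The step I expect to be the main obstacle is exactly this coefficient bookkeeping. The naive assertion that a continuous $d$-cocycle which happens to be a measurable $d$-coboundary is already a continuous $d$-coboundary is false for the non-locally-convex modules $C((\partial\mathbb{H}^n)^{p+1},A)$ and $C(G,A)$, so it must never be invoked in isolation; instead, surjectivity and injectivity of $\iota_*$ have to be extracted simultaneously from the regularization together with the homotopy $h_i$ and the commutation with $\delta$. The way around the difficulty is to keep the $\delta$-cohomology present throughout, so that Proposition \ref{ltb2} continues to supply locally totally bounded representatives at every stage, and to carry out every integration in the fixed Fr\'echet target $A$ with all remaining variables inert, a regularization that remains available in the absence of local convexity and that commutes with $\delta$.
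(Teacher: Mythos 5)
Your proposal correctly isolates the crux --- a continuous $d$-cocycle that is a measurable $d$-coboundary need not be a continuous $d$-coboundary --- but it never gets past it: at the decisive moment the argument is replaced by the assertion that injectivity and surjectivity ``have to be extracted simultaneously'' from $R_i$, $h_i$ and commutation with $\delta$. Concretely, let $\kappa$ be the locally totally bounded representative furnished by Proposition \ref{ltb2}. Your $R_i\kappa$ is indeed a continuous $d$-cocycle with $\kappa-R_i\kappa=d(h_i\kappa)$, but for $R_i\kappa$ to define a class in $H_\delta H_d(K_c^{p,q}(A))$ \emph{at all} you must produce a continuous $\mu$ with $\delta(R_i\kappa)=d\mu$. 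What you actually have is $\delta\kappa=d\lambda$ with $\lambda$ merely measurable: $R_i\lambda$ need not even be defined, since nothing makes $\lambda$ locally totally bounded and a general measurable Fr\'echet-valued function is not locally integrable; and the homotopy identity only yields $\delta(R_i\kappa)=R_i(d\lambda)=d\bigl(\lambda-h_i(d\lambda)\bigr)$, whose primitive is smoothed in some group variables but raw in the others, hence not continuous. The same defect blocks injectivity, where measurable witnesses of vanishing would have to be converted into continuous ones --- again exactly the forbidden ``naive assertion''. A second, independent gap sits in your inductive step: dimension-shifting by $Q$ replaces $A$ by $C(G,A)/\iota(A)$, which is not Fr\'echet, so the inductive hypothesis (a statement about Fr\'echet coefficients) does not apply to it; moreover the Borel-selection lift of a \emph{continuous} $C(G,A)/\iota(A)$-valued cochain is only measurable, so continuity is destroyed precisely where you need to preserve it. The closing claim that integrating ``in the fixed Fr\'echet target $A$'' repairs this is asserted, not constructed.

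The paper's proof avoids both problems because it never attempts a two-sided chain-level comparison. Proposition \ref{ltb2} is used only to prove the one-sided statement of \emph{effacement}: viewing $\kappa$ as taking values in $\iota(A)\subset C_c(G,A)$, where $C_c(G,A)$ denotes the \emph{continuous} maps $G\to A$ and is therefore a Fr\'echet $G$-module, one sets
\[
\eta(g_0,\dots,g_{q-1})(x_0,\dots,x_p)(g):=(-1)^q\int_G\kappa(g_0,\dots,g_{q-1},gh)(x_0,\dots,x_p)\,\xi(h)\,d\mu_G(h),
\]
with $\xi$ continuous, compactly supported, of integral one; local total boundedness of $\kappa$ plus completeness of $A$ makes the integral converge, and $d\eta=\kappa$. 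Hence every class in $H_\delta H_d(K^{p,q}(A))$, $q>0$, dies under the coefficient inclusion $A\hookrightarrow C_c(G,A)$, and the analogous statement for the continuous functor is standard. Since both functors $A\mapsto H_\delta H_d(K^{p,q}_{(c)}(A))$ admit long exact sequences on the category of Fr\'echet $G$-modules and coincide for $q=0$, Buchsbaum's criterion yields the isomorphism --- with no homotopy inverse, no conversion of measurable coboundaries into continuous ones, and no excursion outside Fr\'echet coefficients (the effacing module is $C_c(G,A)$, not $C(G,A)$ or its quotient). If you want to salvage your scheme, this is the missing ingredient: abandon the attempted regularization-plus-induction inverse and instead verify effacement and agreement in degree zero, letting the abstract uniqueness theorem do the rest.
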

\begin{proof}
By Proposition \ref{ltb2} any cohomology class $[[\kappa]_d]_\delta\in H_\delta H_d({K}^{p,q}(A))$ has a locally totally bounded representative $\kappa$. Then, as in the proof of T. Austin and C.C. Moore of the isomorphism between continuous and measurable cohomology for Fr\'echet coefficients, such a locally totally bounded cocycle is effaced by the inclusion $A\hookrightarrow C_c(G;A)$ and the result will follow by Buchsbaum's criterion. More precisely, let us check that the three conditions of Buchsbaum's criterion (Lemma \ref{BuchsbaumLemma}) hold for both sides of the isomorphism. Firstly, 
\[
H_\delta H_d(K^{p,0}(A))=H_\delta H_d(K_c^{p,0}(A))=H_\delta((C((\partial\mathbb{H}^n)^{p+1};A))^G),
\]
so both sides agree in degree zero. Furthermore, the existence of continuous cross sections for Fr\'echet modules ensures that condition \eqref{cond2} holds for both sides. 
For condition \eqref{cond3} we claim that the short sequence 
\[
\xymatrix{
0\ar[r] & A \ar[r]^-\iota & C_c(G;A)\ar[r] & C_c(G;A)/\iota(A)\ar[r] & A
}
\]

effaces all $[[\kappa]_d]_\delta\in H_\delta H_d({K_{(c)}}^{p,q}(A))$.
Indeed, assume that $\kappa$ is a locally totally bounded representative of the class $[[\kappa]_d]_\delta$ (which is automatic if $[[\kappa]_d]_\delta\in H_\delta H_d({K_{c}}^{p,q}(A)))$. Then there exists an $\eta:G^q\times (\partial\mathbb{H}^n)^{p+1}\to C_c(G;A)$ s.t. $d\eta=\kappa$ where $\kappa$ is viewed as a map $G^{q+1}\times(\partial\mathbb{H}^n)^{p+1}\to C_c(G;A)$ taking values in $\iota(A)\subset C_c(G;A)$. For example, we can define $\eta$ by
\begin{eqnarray*}
&&\eta(g_0,\dots,g_{q-1})(x_0,\dots,x_p)(g):=\\
&&(-1)^q\int_G\kappa(g_0,\dots,g_{q-1},gh)(x_0,\dots,x_p)\xi(h)d\mu_G(h),
\end{eqnarray*}
where $g_0,\dots,g_{q-1},g\in G$, $x_0,\dots,x_p\in \partial\mathbb{H}^n$ and $\xi:G\to\mathbb{R}_{>0}$ is a compactly-supported continuous function with $\int_G \xi d\mu_G=1$. Then
\begin{align*}
d\eta&(g_0,\dots,g_{q})(x_0,\dots,x_p)(g)\\
&=\sum_{i=0}^q (-1)^i(-1)^q\int_G \kappa(g_0,\dots,\hat{g}_i,\dots,g_q,gh)(x_0,\dots,x_p)\xi(h)d\mu_G(h)\\
&=(-1)^q \int_G d\kappa(g_0,\dots,g_q,gh)(x_0,\dots,x_p)\xi(h)d\mu_G(h)\\
&\hspace{.5cm} -(-1)^q(-1)^{q+1} \int_G \kappa(g_0,\dots,g_q)(x_0,\dots,x_p)\xi(h) d\mu_G(h)\\
&=\kappa(g_0,\dots,g_q)(x_0,\dots,x_p),
\end{align*}
where the last equality follows from the fact that $\kappa$ is a cocycle. It follows that the image of $[[\kappa]_d]_\delta$ in $ H_\delta H_d({K_{c}}^{p,q}(C_c(G;A)))$ is zero.

\end{proof}
Let $K=\text{SO}(n-2)$ and let $s:G/K\to G$ be a locally totally bounded Borel section such that $s(K)=e$. By the Eckmann-Shapiro Lemma we have the isomorphism $H_m^q(G;C(G/K;C^{p-3}))\cong H_m^q(K;C^{p-3})$. From Proposition \ref{cocycles continuous} it follows that if we restrict to cocycles in $ H_\delta H_d(K^{p,q})$ we can assume them to be continuous in $G^{q+1}$. As shown in section \ref{ES Lemma} we then get an explicit map $\phi:C_c(G^{q+1}; C(G/K;C^{p-3}))^G\to C_c(K^{q+1};C^{p-3})^K$ that induces the isomorphism 
\[
H_\delta H_d(C(G^{q+1}; C(G/K;C^{p-3}))^G)\cong H_\delta H_d (C(K^{q+1};C^{p-3})^K).
\]
\begin{proof}[Proof of Proposition \ref{E_2}]
Let $[[\alpha]_d]_\delta\in H_\delta H_d(K_c^{p,q})=E_2^{p,q}$. We will show that in this case $\alpha$ is cohomologous in $H_\delta H_d (K_c^{p,q})$ to a coboundary in $H_d(K^{p,q})$. By Proposition \ref{ltb2}, the cohomology class of $\alpha$ has a locally totally bounded representative 
\[
\beta:G^{q+1}\times (\partial\mathbb{H}^n)^{p+1}\to \mathbb{R}.
\]
Then $\phi(\beta):K^{q+1}\times (\partial\mathbb{H}^n)^{p-2}\to \mathbb{R}$  is also a locally totally bounded cocyle and furthermore we have $\phi(\beta)=d\eta$, where $\eta:K^{q}\times (\partial\mathbb{H}^n)^{p-2}\to \mathbb{R}$ is defined by  
\begin{eqnarray*}
&&\eta(k_0,\dots,k_{q-1})(x_0,\dots,x_{p-3}):=\\
&& (-1)^q\int_K \phi(\beta)(k_0,\dots,k_{q-1},k)(x_0,\dots,x_{p-3}) d\mu_K(k).
\end{eqnarray*}
It follows that $\eI_2^{p,q}=0$ for $p>2$ and $q>0$. 
\end{proof}
\section{Injectivity of the comparison map in degree 3}
Corollary \ref{inj}, i.e. injectivity of the comparison map for real hyperbolic space $\mathbb{H}^n$ in degree 3, is an immediate consequence of Theorem \ref{iso}. By this theorem, we have 
\[
H_c^3(G;\mathbb{R})=\frac{\ker(\delta:C((\partial\mathbb{H}^n)^4;\mathbb{R})^G\to C((\partial\mathbb{H}^n)^5;\mathbb{R})^G)}{\text{im}(\delta:C((\partial\mathbb{H}^n)^3;\mathbb{R})^G\to C((\partial\mathbb{H}^n)^4;\mathbb{R})^G)}.
\]
Furthermore, the continuous bounded cohomology of $G$ can also be computed with maps that are defined on the boundary of hyperbolic space \cite[Theorem 7.5.3]{Mon}. That is,
\[
H^3_{c,b}(G;\mathbb{R})=\frac{\ker(\delta:L^\infty((\partial\mathbb{H}^n)^4;\mathbb{R})^G\to L^\infty((\partial\mathbb{H}^n)^5;\mathbb{R})^G)}{\text{im}(\delta:L^\infty((\partial\mathbb{H}^n)^3;\mathbb{R})^G\to L^\infty((\partial\mathbb{H}^n)^4;\mathbb{R})^G)},
\]
where $L^\infty((\partial\mathbb{H}^n)^p;\mathbb{R})\subset C((\partial\mathbb{H}^n)^p;\mathbb{R})$ consists of essentially bounded measurable function classes. By $3$-transitivity of the action of $G$ on the boundary of hyperbolic space it then follows that cochains in degree $2$ are constant. Since in even degree applying $\delta$ to a constant gives zero there are no coboundaries in degree $3$. Hence $H_c^3(G;\mathbb{R})$ and $H^3_{c,b}(G;\mathbb{R})$ are equal to the corresponding spaces of cocycles and it follows that the comparison map is injective, given as the natural inclusion of cocycles. 
\par
Injectivity in degree $3$ for Isom$^+(\mathbb{H}^n)$ also follows from a simpler argument which only uses some basic properties of hyperbolic space and the injectivity in degree 3 for $n=3$. Denote by $\mathbb{R}_\epsilon$ the Isom$(\mathbb{H}^n)$-module $\mathbb{R}$ with Isom$(\mathbb{H}^n)$-action given by the homomorphism
 \[
 \epsilon :\text{Isom}(\mathbb{H}^n)\to\text{Isom}(\mathbb{H}^n)/\text{Isom}^+(\mathbb{H}^n)\cong\{1,-1\}.
 \] 
We have
\[
H^*_{c,(b)}(\text{Isom}^+(\mathbb{H}^n);\mathbb{R})\cong H^*_{c,(b)}(\text{Isom}(\mathbb{H}^n);\mathbb{R})\oplus H^*_{c,(b)}(\text{Isom}(\mathbb{H}^n);\mathbb{R}_\epsilon).
\]
Also, $H^3_{c,(b)}(\text{Isom}^+(\mathbb{H}^3);\mathbb{R})$ is generated by the volume cocycle which is equivariant, i.e. in $H^3_{c,(b)}(\text{Isom}(\mathbb{H}^3);\mathbb{R}_\epsilon)$, and thus $H^3_{c,(b)}(\text{Isom}(\mathbb{H}^3);\mathbb{R})=0$. On the other hand, for $n>3$ we have $H^3_{c,(b)}(\text{Isom}(\mathbb{H}^n);\mathbb{R}_\epsilon)=0$ (see \cite{BBI}). Thus it follows that $H^3_{c,(b)}(\text{Isom}^+(\mathbb{H}^n);\mathbb{R})=H^3_{c,(b)}(\text{Isom}(\mathbb{H}^n);\mathbb{R})$. 
\begin{Le}
\label{nul}
Let $n>3$. Then $H^3_{c,b}(\mathrm{Isom}(\mathbb{H}^{n});\mathbb{R})=0$. 
\end{Le}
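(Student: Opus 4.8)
The plan is to reduce the statement, for $n>3$, to the vanishing $H^3_{c,b}(\text{Isom}(\mathbb{H}^3),\mathbb{R})=0$ recorded just above (this is where the degree-$3$ injectivity for $n=3$ enters), by restricting an invariant boundary cocycle to a totally geodesic copy of $\mathbb{H}^3$. Write $G=\text{Isom}(\mathbb{H}^n)$. By \cite[Theorem 7.5.3]{Mon} the group $H^3_{c,b}(G,\mathbb{R})$ is computed by the complex $(L^\infty((\dH)^{\bullet+1},\mathbb{R})^G,\delta)$, and since $G$ acts essentially transitively on triples of boundary points every invariant cochain on $(\dH)^3$ is constant, so there are no coboundaries in degree $3$. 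Hence it suffices to show that every essentially bounded $G$-invariant measurable cocycle $c:(\dH)^4\to\mathbb{R}$ vanishes almost everywhere.

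I would first record two facts. The acyclicity of $(C^\ast,\delta)$ proved in Section~3 (coning off a fixed boundary point $y$) applies verbatim to bounded cochains, so $c=\delta b$ where $b(x_0,x_1,x_2):=c(y,x_0,x_1,x_2)$ is again bounded; note that this is an identity among $4$-tuples. Second, almost every $4$-tuple in $\dH$ spans a unique totally geodesic $2$-sphere $\partial\mathbb{H}^3\subset\dH$, the group $G$ acts transitively on the set $\mathcal{S}$ of such spheres with stabiliser $H\cong\text{Isom}(\mathbb{H}^3)\times O(n-3)$ (the $O(n-3)$-factor fixing the corresponding $\partial\mathbb{H}^3$ pointwise), and consequently $\mathcal{S}\cong G/H$ carries an invariant measure class over which the measure on $(\dH)^4$ disintegrates $G$-equivariantly, with fibres $(\partial\mathbb{H}^3)^4$.

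The core of the argument is to restrict to a fibre. For almost every $S\in\mathcal{S}$ the restriction $c_S$ of $c$ to $(\partial S)^4$ is a well-defined bounded measurable function, by Fubini for the disintegration. Restricting, for each fixed $h\in\text{Isom}(\mathbb{H}^3)\subset H$, the invariance relation $c(h\,\cdot\,)=c(\,\cdot\,)$---a relation among $4$-tuples---to this fibre shows that $c_S$ is $\text{Isom}(\mathbb{H}^3)$-invariant. Crucially, restricting the coboundary identity $c=\delta b$, which again only involves $4$-tuples and hence respects the fibration $(\dH)^4\to\mathcal{S}$, yields $c_S=\delta(b|_{(\partial S)^3})$ on $(\partial S)^4$ for almost every $S$; in particular $c_S$ is a cocycle. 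Thus $c_S$ is a bounded $\text{Isom}(\mathbb{H}^3)$-invariant boundary cocycle in degree $3$. Since $H^3_{c,b}(\text{Isom}(\mathbb{H}^3),\mathbb{R})=0$ and, by essential $3$-transitivity of $\text{Isom}(\mathbb{H}^3)$ on $\partial\mathbb{H}^3$, there are likewise no degree-$3$ coboundaries for $\text{Isom}(\mathbb{H}^3)$, every such cocycle is identically zero; hence $c_S=0$ for almost every $S$, and reassembling through the disintegration gives $c=0$ almost everywhere.

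The main obstacle is exactly the passage ``restrict $c$ to the null subsphere $\partial\mathbb{H}^3$'', and in particular the verification that $c_S$ remains a cocycle. A naive restriction of the cocycle identity $\delta c=0$ is illegitimate: it is a relation among $5$-tuples, and the set of $5$-tuples lying on a common $2$-sphere is null in $(\dH)^5$ (five generic boundary points span a $3$-sphere, not a $2$-sphere), so the almost-everywhere identity does not descend to the fibres. The device that circumvents this is to disintegrate instead the $4$-variable coboundary relation $c=\delta b$, which is compatible with the fibration over $\mathcal{S}$; the cocycle property of $c_S$ then comes for free. Carrying this out rigorously---the $G$-equivariant disintegration of measures, the choice of Borel representatives, and the commutation of $\delta$ with restriction to almost every fibre---is the technical heart of the proof.
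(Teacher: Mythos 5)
Your proposal takes a genuinely different route from the paper, and it has one genuine gap: the step establishing that $c_S$ is $\text{Isom}(\mathbb{H}^3)$-invariant. For a \emph{fixed} $h$, the invariance relation $c(h\,\cdot)=c(\cdot)$ holds only almost everywhere on $(\partial\mathbb{H}^n)^4$, and $(\partial S)^4$ is a null set; so ``restricting this relation to the fibre'' is exactly the illegitimate move you yourself rule out when you refuse to restrict $\delta c=0$ to a fibre. Disintegration does convert the a.e.\ relation into a statement about almost every fibre, but with two defects: the exceptional null set of spheres depends on $h$, and on a fibre $(\partial S')^4$ not stabilized by $h$ the relation reads $c_{hS'}\circ h=c_{S'}$, an equivariance statement linking two \emph{different} fibres rather than invariance of one. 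Since the spheres stabilized by a fixed $h$ form a null subset of $\mathcal{S}$, they can all lie inside the exceptional set, and the argument as stated produces nothing. A repair is possible but needs an extra idea: by $G$-equivariance of the canonical disintegration and Fubini on $G\times\mathcal{S}$, for a.e.\ $S$ one has $c_{gS}\circ g=c_S$ for a.e.\ $g\in G$; comparing pairs $g,g'$ lying in the same coset of $\text{Stab}(S)$ shows that $c_{gS}$ is invariant under a conull set of elements of $\text{Stab}(gS)$, and since the elements fixing the class $c_{gS}$ form a measurable subgroup, and a conull measurable subgroup of a locally compact group is the whole group, full invariance follows. Your coning trick $c=\delta b$ (to get the cocycle property fibrewise) is correct and is a real insight; the point is that the invariance identity suffers from the same disease and needs the same level of care, which your justification does not supply.

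It is worth seeing how the paper sidesteps all of this. Its proof never touches the boundary: it computes $H^3_{c,b}(\text{Isom}(\mathbb{H}^n),\mathbb{R})$ on the complex of \emph{continuous} bounded invariant cochains on the interior, $C_{c,b}((\mathbb{H}^n)^{\ast+1},\mathbb{R})^{\text{Isom}(\mathbb{H}^n)}$. A continuous cochain restricts pointwise to $(\mathbb{H}^3)^4$ with no measure-theoretic issues; the restriction of a cocycle $\beta$ is an $\text{Isom}(\mathbb{H}^3)$-invariant cocycle (isometries of $\mathbb{H}^3$ extend to $\mathbb{H}^n$), hence bounds some $\alpha$ by the $n=3$ vanishing; and then, instead of restricting the cocycle to almost every copy of $\partial\mathbb{H}^3$, the paper \emph{extends the primitive}: $\bar\alpha(x_0,x_1,x_2):=\alpha(gx_0,gx_1,gx_2)$, where $g$ moves the plane spanned by the three points into $\mathbb{H}^3$, is well defined by invariance of $\alpha$ and transitivity of $\text{Isom}(\mathbb{H}^n)$ on totally geodesic planes, and satisfies $\delta\bar\alpha=\beta$ because any four points can likewise be moved into $\mathbb{H}^3$ (this is where $n>3$, i.e.\ $4\leq n$, is used). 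So the paper's choice of resolution is what makes the restriction and extension steps trivial; your boundary formulation forces the disintegration machinery and, only with the invariance step repaired as above, becomes a correct but substantially heavier proof.
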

\begin{proof}
Let $[\beta]\in H_{c,b}^3(\text{Isom}(\mathbb{H}^{n});\mathbb{R})$. Let $i:\mathbb{H}^3\hookrightarrow \mathbb{H}^{n}$ be a natural embedding. We will identify the image $i(\mathbb{H}^3)\subset\mathbb{H}^{n}$ with $\mathbb{H}^3$. Since $\beta$ restricted to $\mathbb{H}^3$ is a cocycle in $C_{c,b}((\mathbb{H})^3;\mathbb{R})^{\text{Isom}(\mathbb{H}^3)}$ there is an $\alpha\in C_{c,b} ((\mathbb{H}^3)^3;\mathbb{R})^{\text{Isom}(\mathbb{H}^3)}$ such that $\delta\alpha=\beta |_{\mathbb{H}^3}$. Let $x_0,x_1,x_{2}\in\mathbb{H}^{n}$. These points lie in the $2$-dimensional linear plane $H(x_0,x_1,x_{2})$ spanned by the points $x_0,x_1,x_2$. Since $\text{Isom}(\mathbb{H}^{n})$ acts transitively on linear $2$-planes there always exists a $g\in \text{Isom}(\mathbb{H}^{n})$ such that $g (H(x_0,x_1,x_{2}))\subset \mathbb{H}^3$. Define $\bar{\alpha}:(\mathbb{H}^{n})^3\to\mathbb{R}$ by 
\[
\bar{\alpha}(x_0,x_1,x_{2})=\alpha(gx_0,gx_1,gx_{2}),
\]
 where $g\in\text{Isom}(\mathbb{H}^{n})$ is such that $g(H(x_0,x_1, x_{2}))\subset \mathbb{H}^3$. One checks easily that this is well defined,  $\bar{\alpha}\in C_{c,b}((\mathbb{H}^{n})^{3};\mathbb{R})^{\text{Isom}(\mathbb{H}^{n})}$ and $\delta \bar{\alpha}=\beta$. Hence it follows that $H^3_{c,b}(\text{Isom}(\mathbb{H}^{n});\mathbb{R})=0$.
 \end{proof}
The proof above only uses the following two facts:
\begin{enumerate}
\item An isometry of $\mathbb{H}^n$ can be extended to an isometry of $\mathbb{H}^{n+1}$. 
\item If $x_0,\dots,x_k\in\mathbb{H}^{n+1}$ where $k+1\leq n$ then there exists a $g\in\text{Isom}(\mathbb{H}^{n+1})$ such that $gH(x_0,\dots,x_k)\subset \mathbb{H}^n$ where $H(x_0,\dots,x_k)$ denotes the linear subspace spanned by the points $x_0,\dots,x_k$.
\end{enumerate}
These also hold in complex hyperbolic space $\mathbb{H}_{\mathbb{C}}^n$. Hence we immediately obtain the following lemma. 
\begin{Le}
\label{nul}
Let $k\leq n$ and suppose that $H_{c,b}^k(\mathrm{Isom}(\mathbb{H}_{(\mathbb{C})}^n);\mathbb{R})=0$. Then
 \[
H^k_{c,b}(\mathrm{Isom}(\mathbb{H}_{(\mathbb{C})}^{n+1});\mathbb{R})=0.
\] 
\end{Le}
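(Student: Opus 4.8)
The plan is to carry over the argument of the preceding Lemma almost verbatim, since the proof there was deliberately written to rely only on the two structural facts that were singled out immediately after its proof. The statement to be established is the implication: if $k\leq n$ and $H_{c,b}^k(\text{Isom}(\mathbb{H}_{(\mathbb{C})}^n),\mathbb{R})=0$, then $H_{c,b}^k(\text{Isom}(\mathbb{H}_{(\mathbb{C})}^{n+1}),\mathbb{R})=0$. The essential observation is that the previous proof for degree $3$ used nothing specific to $\mathbb{H}^3\hookrightarrow\mathbb{H}^n$ beyond (1) that isometries extend to the larger space and (2) that any configuration of $k+1$ points in the larger space can be moved by an isometry into a totally geodesic copy of the smaller space, provided $k+1\leq n+1$, i.e. $k\leq n$.

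First I would fix a class $[\beta]\in H_{c,b}^k(\text{Isom}(\mathbb{H}_{(\mathbb{C})}^{n+1}),\mathbb{R})$ represented by a continuous bounded $\text{Isom}(\mathbb{H}_{(\mathbb{C})}^{n+1})$-invariant cocycle $\beta$ on $(\mathbb{H}_{(\mathbb{C})}^{n+1})^{k+1}$. Using fact (1) I embed $\mathbb{H}_{(\mathbb{C})}^n\hookrightarrow\mathbb{H}_{(\mathbb{C})}^{n+1}$ as a totally geodesic subspace and identify its image with $\mathbb{H}_{(\mathbb{C})}^n$; the restriction $\beta|_{\mathbb{H}_{(\mathbb{C})}^n}$ is then an $\text{Isom}(\mathbb{H}_{(\mathbb{C})}^n)$-invariant continuous bounded cocycle in degree $k$. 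By the vanishing hypothesis $H_{c,b}^k(\text{Isom}(\mathbb{H}_{(\mathbb{C})}^n),\mathbb{R})=0$, there exists a primitive $\alpha\in C_{c,b}((\mathbb{H}_{(\mathbb{C})}^n)^k,\mathbb{R})^{\text{Isom}(\mathbb{H}_{(\mathbb{C})}^n)}$ with $\delta\alpha=\beta|_{\mathbb{H}_{(\mathbb{C})}^n}$.

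Next I would globalize $\alpha$ to the whole of $\mathbb{H}_{(\mathbb{C})}^{n+1}$ exactly as before. Given points $x_0,\dots,x_{k-1}\in\mathbb{H}_{(\mathbb{C})}^{n+1}$, these span a totally geodesic subspace $H(x_0,\dots,x_{k-1})$ of dimension at most $k-1\leq n-1$, and by fact (2) (with $k-1$ in place of $k$, noting $k\leq n$ so $k-1+1=k\leq n+1$) there is a $g\in\text{Isom}(\mathbb{H}_{(\mathbb{C})}^{n+1})$ carrying this subspace into $\mathbb{H}_{(\mathbb{C})}^n$. I then set $\bar\alpha(x_0,\dots,x_{k-1}):=\alpha(gx_0,\dots,gx_{k-1})$. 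The remaining verifications are the routine ones already invoked in the previous lemma: that $\bar\alpha$ is well defined independently of the choice of $g$ (using $\text{Isom}(\mathbb{H}_{(\mathbb{C})}^n)$-invariance of $\alpha$), that it is continuous, bounded, and $\text{Isom}(\mathbb{H}_{(\mathbb{C})}^{n+1})$-invariant, and that $\delta\bar\alpha=\beta$. This last identity is checked by evaluating $\delta\bar\alpha$ on $k+1$ points, moving them jointly into $\mathbb{H}_{(\mathbb{C})}^n$ by a single isometry (possible since $k+1\leq n+1$) and using $\delta\alpha=\beta|_{\mathbb{H}_{(\mathbb{C})}^n}$ together with the invariance of $\beta$. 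Hence $[\beta]=0$, and $H_{c,b}^k(\text{Isom}(\mathbb{H}_{(\mathbb{C})}^{n+1}),\mathbb{R})=0$.

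The only genuine subtlety — and hence the point I would be most careful about — is the well-definedness of $\bar\alpha$ and the verification that a single isometry can simultaneously bring all $k+1$ vertices into the geodesic copy of $\mathbb{H}_{(\mathbb{C})}^n$ when checking $\delta\bar\alpha=\beta$. In the real case this is transparent because totally geodesic subspaces of $\mathbb{H}^n$ are themselves real hyperbolic spaces and $\text{Isom}(\mathbb{H}^{n+1})$ acts transitively on $k$-dimensional hyperplanes; in the complex case one must note that fact (2) as stated already guarantees the analogous transitivity on the relevant totally geodesic subspaces, so the same reasoning applies without change. Everything else is formal, so I expect no serious obstacle beyond recording these transitivity facts, which the paper has explicitly asserted to hold in $\mathbb{H}_{\mathbb{C}}^n$.
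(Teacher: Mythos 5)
Your proposal is correct and follows exactly the route the paper intends: the paper gives no separate proof of this lemma but declares it immediate because the degree-$3$ argument (restrict $\beta$ to the embedded smaller space, take a primitive $\alpha$ there by the vanishing hypothesis, and globalize it to $\bar\alpha$ via isometries moving spanned totally geodesic subspaces into the smaller space) uses only the two facts (1) and (2), which hold in the complex case as well. Your write-up is precisely that generalization, with the correct bookkeeping that $k$ points need $k-1\leq n-1$ dimensions and the cocycle identity $\delta\bar\alpha=\beta$ needs $k+1$ points spanning at most $k\leq n$ dimensions, which is exactly the hypothesis.
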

Theorem  \ref{stability} is proved similarly. On cochains define 
\[
j:C((\mathbb{H}_{(\mathbb{C})}^n)^{k+1};\mathbb{R})^{\text{Isom}(\mathbb{H}_{(\mathbb{C})}^n)}\to C((\mathbb{H}_{(\mathbb{C})}^{n+1})^{k+1};\mathbb{R})^{\text{Isom}(\mathbb{H}_{(\mathbb{C})}^{n+1})}
\]
by
\[
j(\beta)(x_0,\dots,x_k):=\beta(gx_0,\dots,gx_k),
\]
where $g\in\text{Isom}(\mathbb{H}_{(\mathbb{C})}^n)$ is such that $g(H(x_0,\dots,x_k))\subset \mathbb{H}_{(\mathbb{C})}^n$. Then 
\[
\delta( j(\beta))(x_0,\dots,x_{k+1})=\delta\beta(g'x_0,\dots,g'x_{k+1}),
\]
with $g'\in \text{Isom}(\mathbb{H}_{(\mathbb{C})}^{n+1})$ such that $g'(H(x_0,\dots,x_{k+1}))\subset \mathbb{H}_{(\mathbb{C})}^n$. Note that such a $g'$ exists since $k+1\leq n$. It follows that if $\beta$ is a cocycle then $j(\beta)$ is as well. Let furthermore 
\[
r:C((\mathbb{H}_{(\mathbb{C})}^{n+1})^{k+1};\mathbb{R})^{\text{Isom}(\mathbb{H}_{(\mathbb{C})}^{n+1})}\to C((\mathbb{H}_{(\mathbb{C})}^n)^{k+1};\mathbb{R})^{\text{Isom}(\mathbb{H}_{(\mathbb{C})}^n)}
\]
be the map defined by restricting a $k$-cochain to $({\mathbb{H}_{(\mathbb{C})}^n})^{k+1}$. Then for a cocycle $\beta$ in degree $k$
\begin{eqnarray*}
r\circ j(\beta)(x_0,\dots,x_k)&=& j(\beta) (x_0,\dots,x_k)\\
&=& \beta(x_0,\dots,x_k),
\end{eqnarray*}
for all $x_0,\dots,x_k\in \mathbb{H}_{(\mathbb{C})}^n$. It follows that $j$ induces an injective map on cohomology. 

\addcontentsline{toc}{section}{Bibliography}

\end{document}